\theoremstyle{plain}
\newtheorem{theorem}{Theorem}
\newtheorem{lemma}{Lemma}
\newtheorem{proposition}{Proposition}
\newtheorem{corollary}{Corollary}
\newtheorem{definition}{Definition}
\newtheorem{remark}{Remark}
\newtheorem{example}{Example}
\theoremstyle{remark}
\newcommand\bigzero{\makebox(0,0){\text{\huge0}}}
\newcommand{\sn}{\mathbb{S}^{2n+1}_{2p}}
\newcommand{\cn}{\mathbb{C}^{n+1}_p}
\newcommand{\cpn}{\mathbb{C}P^n_p}
\newcommand{\snp}{\mathbb{S}^{2n+1}_{2p}}
\newcommand{\RR}{\mathbb{R}}
\newcommand{\CC}{\mathbb{C}}
\newcommand{\dd}{\mathcal{D}}
\newcommand{\ep}{\varepsilon}
\newcommand{\de}{(-\delta,\delta)}
\newcommand{\hp}{\hat{\psi}}
\title{{Ruled Real Hypersurfaces in the\\ Indefinite Complex Projective Space}}
\author[1]{Marilena Moruz}
\affil[1]{Al.I. Cuza University of Iaşi, Faculty of Mathematics \authorcr Bd. Carol I, n. 11, 700506 Ia\c si, Romania\authorcr marilena.moruz@gmail.com\authorcr \, }
\author[2]{Miguel Ortega}
\author[2]{Juan de Dios P\'erez\thanks{First author is supported by a grant of the Romanian Ministry of Education and Research, CNCS - UEFISCDI, project number PN-III-P1-1.1-PD-2019-0253, within PNCDI III. Ortega and P\'erez partially financed by: (1) the Spanish MICINN and ERDF, project PID2020-116126GB-I00; (2) the “Maria de Maeztu” Excellence Unit IMAG,
ref. CEX2020-001105-M, funded by MCIN/AEI/10.13039/501100011033/. Ortega also partially supported by the Andaluc\'ia grant A-FQM-494-UGR18. Pérez also partially supported by the Spanish FEDER-Andaluc\'ia grant PY20 01391.}}
\affil[2]{Department of Geometry and Topology, Faculty of Sciences,
Institute of Mathematics IMAG, Universidad de Granada, 18071 Granada
Spain \authorcr miortega@ugr.es, jdperez@ugr.es}
\begin{document}

\maketitle

\begin{abstract} The main two families of real hypersurfaces in complex space forms are Hopf and ruled. However, very little is known about real hypersurfaces in the indefinite complex projective space $\cpn$. In a previous work, Kimura and the second author introduced Hopf real hypersurfaces in $\cpn$. In this paper, ruled real hypersurfaces in the indefinite complex projective space are introduced, as those whose maximal holomorphic distribution is integrable, and such that the leaves are totally geodesic holomorphic hyperplanes. A detailed description of the shape operator is computed, obtaining two main different families. A method of construction is exhibited, by gluing in a suitable way totally geodesic holomorphic hyperplanes along a non-null curve.  Next, the classification of all minimal ruled real hypersurfaces  is obtained, in terms of three main families of curves, namely geodesics, totally real circles and a third case which is not a Frenet curve, but can be explicitly computed. Four examples of minimal ruled real hypersurfaces are described. 
\end{abstract}

\noindent \textbf{Keywords:} Real hypersurface, indefinite complex projective space, ruled real hypersurface.\\
\noindent \textbf{2010MSC:}  
Primary 
53B25, 
% Local submanifolds
53C50; 
%Lorentz manifolds, manifolds with indefinite metrics
Secondary 53C42, 
% Immersions (minimal, prescribed curvature, tight, etc.) 
53B30. 
%Lorentz metrics, indefinite metrics}

\section{Introduction}

The study of real hypersurfaces  in non-flat complex space forms has been strongly developed during the last 50 years. Since a short list would not be fair, a good survey 
can be found in Cecil and Ryan's book \cite{CR}. Anyway, the main pillars of the theory of real hypersurfaces in complex projective space $\mathbb{C}P^n$ are the following papers. First, Takagi, \cite{T1}, classified the homogeneous ones, obtaining a list of 6 examples. Second, Cecil and Ryan developed the focal theory in \cite{CR1}, obtaining local characterizations of Hopf examples, namely, those whose Reeb vector field $\xi=-JN$ is principal, where $N$ is a unit normal vector field and $J$ is the complex structure of $\mathbb{C}P^n$. Next, Kimura \cite{K2} characterized the Takagi's examples as the only ones which are Hopf and all of whose principal curvatures are constant. Similar studies for the complex hyperbolic space were made, but we recommend again \cite{CR}. 

In addition,  Kimura also introduced in \cite{K1} a very important family of real hypersurfaces in the complex projective space, the so-called \textit{ruled} ones. Indeed, if $\cal D$ denotes the \textit{maximal holomorphic} distribution on $M$, given at each point $x \in M$ by all the vectors orthogonal to $\xi_x$, a ruled real hypersurface is such that $\cal D$ is integrable and its leaves are totally geodesic complex projective spaces $\mathbb{C}P^{n-1}$ in $\mathbb{C}P^n$. Among the works on ruled real hypersurfaces, we recall the one by 
Lohnherr and Reckziegel, \cite{LR}, who proved that any ruled real hypersurface can be parametrized from a certain curve on $\mathbb{C}P^n$, among other results. Some recent results concerning them are \cite{K3} and \cite{PB}. Summing up, both Hopf and ruled real hypersurfaces are two of the main families, because they appear in plenty of results. 

If we consider the indefinite complex projective space $\mathbb{C}P_p^n$ of index $1 \leq p \leq n-1$, see \cite{BR} for its Kaehlerian structure, the beginning of the study of its non-degenerate real hypersurfaces $M$ can be dated in 2015 by Anciaux and Panagiotidou, \cite{AP}. They obtained an almost contact metric structure on $M$ and proved a few basic results, like the non-existence of either totally umbilical real hypersurfaces or real hypersurfaces with parallel shape operator in $\mathbb{C}P_p^n$. They also posed a couple of problems. 

In \cite{KO}, Kimura and the second author obtained several families of Hopf real hypersurfaces in $\mathbb{C}P_p^n$, which were named {\it of type} $A_+$, $A_-$, $B_0$, $B_+$, $B_-$ and $C$ because they are somehow similar to those in Takagi's and Montiel's list \cite{M, T1}. They also showed an example of a degenerate (light-like) Hopf real hypersurface. This meant a positive answer to a question asked by Anciaux and Panagiotidou. All cases $A_+$, $A_-$, $B_0$, $B_+$ and $B_-$, were constructed as tubes of certain radius $r$ over some holomorphic totally geodesic submanifolds of complex dimension $m$. Also, the subindex $\pm$ indicates the causal character of the unit normal, except $B_0$, which is time-like Example $C$ is similar to the horosphere in the complex hyperbolic space, so they are also called horospheres. Next, in the same paper, they also proved that an $\eta$-umbilical ($AX=\lambda X+\varepsilon\mu\eta(X)\xi$, for any $X$ tangent to $M$, with $\xi$ normal and of unit length $\varepsilon=\pm 1$) non-degenerate real hypersurface in $\mathbb{C}P_p^n$ must be either of type $A_+$ or $A_-$ or $C$. They also classified real hypersurfaces whose Weingarten endomorphism is diagonalizable and satisfy that $\xi$ is a Killing vector field, answering another question posed in \cite{AP}.

Since Hopf real hypersurfaces in  $\cpn$ have been already introduced, the purpose of the present paper is to begin the study of \textit{ruled} real hypersurfaces in $\mathbb{C}P_p^n$, % providing some examples and is organized 
in the following way: 
\begin{enumerate}
\item Section \ref{axioms-of-planes} is devoted to obtaining three kinds of axioms of planes in $\mathbb{C}P_p^n$ that yield the existence of some families of totally geodesic submanifolds %of the ambient space 
passing through a given point $x\in \mathbb{C}P_p^n$ and tangent to some vector subspaces of $T_x\mathbb{C}P_p^n$. We also introduce Frenet curves in $\mathbb{C}P_p^n$. 

\item In Section \ref{ruled}, we define ruled real hypersurfaces in $\mathbb{C}P_p^n$ as non-degenerate real hypersurfaces whose maximal holomorphic distributions are integrable with totally geodesic leaves in the ambient space, obtaining a characterization of such real hypersurfaces in terms of the shape operator. We also make a description of the shape operator of such hypersurfaces. In Subsection \ref{parametrization}  we introduce ruled hypersurface \textit{parametrizations} in $\mathbb{C}P_p^n$ induced by unit speed curves. We show that for a ruled real hypersurface $M$ in $\mathbb{C}P_p^n$ and each point $q \in M$, there exists a unique parametrization of $M$ around $q$, centered by an integral curve $\alpha$ of the Reeb vector field $\xi$. All this makes sense due to the axiom of planes for maximal holomorphic tangent planes, as in Section \ref{axioms-of-planes}. 

\item In Section \ref{minimal}, Theorem \ref{TH1} states the classification of minimal ruled real hypersurfaces, obtaining three possibilities for the integral curves of $\xi$. Analyzing such three cases and using the axioms of planes in Section \ref{axioms-of-planes}, in Section \ref{examples} we introduce several examples of ruled real hypersurfaces in $\mathbb{C}P_p^n$, which depend on the causal character of the curve. These curves can be either a geodesic, a totally real circle (as in \cite{A-O'M}) contained in a totally real, totally geodesic surface, or a non-Frenet curve contained in a totally real, totally geodesic 3-submanifold. 
\end{enumerate}

We also would like to remark the technique provided by M.~Kimura et al. in \cite{K3}. They established a one-to-one correspondence between ruled real hypersurfaces in the Riemannian complex hyperbolic space $\mathbb{C}H^n$, and some curves in the indefinite complex projective space of index $2$. Their techniques involve the family of geodesics in $\mathbb{C}H^n$. This point of view is quite interesting, and certainly will shed new light to the study of ruled real hypersurfaces. For us, since this paper is long enough, we will leave that for a future work. 

Finally, the authors would like to thank the referees, because their valuable comments improved this paper. 
%\section{Preliminaries}

\section{Set Up and Basic Results} \label{setup}

\subsection{The indefinite complex projective space $\mathbb{C}P^n_p$}
Let $\CC^{n+1}_p$ be the Euclidean complex space endowed with the following Hermitian product and pseudo-Riemannian metric of index $2p$, 
$z=(z_	1,\ldots,z_{n+1})$, $w=(w_1,\ldots,w_{n+1})\in\CC^{n+1}$, 
\begin{equation}\label{innerp} 
g_{\CC}(z,w)=-\sum_{j=1}^p z_j\bar{w}_j+\sum_{j=p+1}^{n+1}z_j\bar{w}_j, \quad 
g=\mathrm{Re}(g_{\CC}),
\end{equation}
where $\bar{w}$ is the complex conjugate of $w\in\CC^{n+1}$.  The natural complex structure will be denoted by $J$. As usual, we define the set $\mathbb{S}^1=\{a\in\CC : a\bar{a}=1\} = \{\mathrm{e}^{i\theta} : \theta\in\mathbb{R}\}$. We consider the hyperquadric 
\[ \mathbb{S}^{2n+1}_{2p}=\{z\in\CC ^{n+1}_p : g(z,z)=1\},
\]
which is a semi-Riemannian manifold of index $2p$. We define the action and its corresponding quotient
\[\mathbb{S}^1\times\mathbb{S}^{2n+1}_{2p}\rightarrow \mathbb{S}^{2n+1}_{2p}, \ (a,(z_1,\ldots,z_{n+1}))\mapsto (az_1,\ldots,az_{n+1}), 
\]
\[ \pi:\mathbb{S}^{2n+1}_{2p}\rightarrow \cpn=\mathbb{S}^{2n+1}_{2p}/\sim.
\]
Let $g$ be the metric on $\cpn$ such that $\pi$ becomes a semi-Riemannian submersion. The manifold $\cpn$ is called the \textit{Indefinite Complex Projective Space}. See \cite{BR} for details. We need $1\leq p\leq n-1$ to avoid $\mathbb{C}P^n$ with either a Riemannian or a negative definite metric. Let $\tilde\nabla$ be its Levi-Civita connection. Then, $\cpn$ admits a complex structure $J$ induced by $\pi$, with Riemannian tensor
\begin{align}\nonumber 
\bar{R}(X,Y)Z =& g(Y,Z)X-g(X,Z)Y \\ & +g(JY,Z)JX-g(JX,Z)JY+2g(X,JY)JZ, \label{RR}
\end{align}
for any $X,Y,Z\in TM$. Thus, $\cpn$ has constant holomorphic sectional curvature $4$. By an abuse of notation, we will denote by $J$ all the complex structures. It is important to bear in mind that for $q\in \snp$, 
\[ T_q\snp =\{ X\in \cn : g(X,q)=0\}.\] 

\noindent We give the following %two 
definitions.
\begin{definition}
In any complex manifold $(\bar{M},\bar{g},J)$, a tangent plane at $z\in \bar{M}$, $\Pi\subset T_z\bar{M}$, $\dim \Pi\geq 2$, is called \textit{holomorphic} if $J\Pi=\Pi$.
\end{definition}

\begin{definition}
In any complex manifold $(\bar{M},\bar{g},J)$, a tangent plane at $z\in \bar{M}$, $\Pi\subset T_z\bar{M}$, $\dim \Pi\geq 2$, is called  \textit{totally real} if $J\Pi\perp \Pi$.
\end{definition}

\noindent Notice that, according to the above definitions, a submanifold $S\subset \bar{M}$ is \textit{holomorphic} (resp. \textit{totally real}) when for each $p\in S$, $T_pS$ is holomorphic (resp. totally real). 

\begin{definition} Given a point $x\in \cpn$, we call a tangent vector $v\subset T_x\cpn$ to be \emph{space-like} if $g(v,v)>0$ or $v=0$, \emph{time-like} if $g(v,v)<0$, and \emph{light-like} if $\big(g(v,v)=0$ and $v\neq 0\big)$. Also, a tangent plane $\Pi\subset T_p\cpn$ is called \emph{space-like} if the induced metric on $\Pi$ is positive definite, \emph{time-like} if the induced metric is Lorentzian, and \emph{degenerate} if the induced metric is degenerate. Accordingly, a submanifold will be called \emph{non-degenerate} when the induced metric is not degenerate.
\end{definition}

\noindent Given $x\in \cpn$, let  $q\in \mathbb{S}^{n+1}_{2p}$ such that $\pi(q)=x.$ We have 
\begin{equation}
\pi:\sn\to\cpn \quad \text{ and }\quad d\pi_{q}:T_q\sn\to T_x\cpn.
\end{equation}
In general we will refer to $d\pi_q$ as to the restriction 
\begin{equation}
d\pi_q:(\mathrm{Ker}(d\pi_q))^{\perp}\to T_x\cpn,
\end{equation}
where $\mathrm{Ker}(d\pi_q)=\mathrm{span}\{J\vec q\}$, since this restriction is an isometry. Therefore, the map $\pi$ is a semi-Riemannian submersion, i.e., $\pi$ has maximal rank (each derivative map $d\pi$ is surjective) and $d\pi$ preserves lengths of horizontal vectors.

Let $D$ and $\hat{\nabla}$ be the Levi-Civita connections of $\mathbb{C}^{n+1}_p$ and $\snp$, respectively. Since the position vector $\chi:\snp\to \mathbb{C}^{n+1}_p$ behaves as a space-like, unit, normal vector field, the Gauss formula becomes
\begin{equation} D_XY = \hat{\nabla}_XY-\langle X,Y\rangle \chi, \label{GaussEq} \end{equation}
for any $X,Y$ tangent to $\snp$.

We will define next a \textit{Frenet} curve of order $r\geq 1$ in $\cpn$. When $r>1$, we are assuming that the Frenet vectors involved in the Frenet system of equations are never light-like. Given a curve $\alpha:I\to\cpn$, we denote by $D/ds$ the covariant derivative along it. 
\begin{definition}[\textbf{Frenet curves of order $r\geq 1$ in $\cpn$}]\label{lemmaFrenet} 
Given a unit curve $\alpha:I\to \cpn$, calling $\alpha'=E_1$, we have: If $\alpha$ is a geodesic, then $r=1$, its Frenet vector is $\{E_1\}$ and its Frenet curvature is $\kappa_1=0$. If $\alpha$ is not a geodesic, then 
\begin{align*}\label{Frenetsystem}
\frac{DE_1}{ds}&=\varepsilon_2\kappa_1 E_2,\\
\frac{DE_2}{ds}&=-\varepsilon_1\kappa_1 E_1+\varepsilon_3\kappa_2 E_3,\\
\frac{DE_3}{ds}&=-\varepsilon_2\kappa_2 E_2+\varepsilon_4\kappa_3 E_4,\\
&\vdots\\
\frac{DE_r}{ds}&=-\varepsilon_{r-1}\kappa_{r-1} E_{r-1},
\end{align*}
where $\{E_1,\ldots,E_r\}$ are orthonormal vectors along $\alpha$ with lengths $g(E_i,E_i)=\varepsilon_i=\pm 1$ and $\kappa_1,\ldots,\kappa_{r-1}$ their Frenet curvatures. Together they describe the \emph{Frenet system} of the Frenet curve $\alpha$ of order $r$. If $r=n$, then the last vector $E_r=E_n$ is chosen  such that 
$E_1,\ldots,E_n$ form a positive basis. All the Frenet curvatures $\kappa_k>0$, except if $r=n$, when $\kappa_{n-1}$ might be positive or negative. 
\end{definition}

\subsection{Real Hypersurfaces in $\mathbb{C}P^n_p$}\label{realhypersurfaces}

For details, see \cite{KO}. Let $M$ be a connected, non-degenerate, immersed real hypersurface in $\cpn$. If $N$ is a local unit normal vector field such that $\ep=g(N,N)=\pm 1$, we define the \textit{structure} vector field on $M$ as $\xi=-JN$. Clearly, $g(\xi,\xi)=\ep$. Given $X\in TM$, the vector $JX$ might not be tangent to $M$. Then, we decompose it in its tangent and normal parts, namely
\[ JX = \phi X+\ep\eta(X)N,
\]
where $\phi X$ is the tangential part, and $\eta$ is the 1-form on $M$ such that 
\[\eta(X)=g(JX,N)=g(X,\xi).\]
The set $(g,\xi,\phi,\eta)$ is called an almost contact metric structure, whose properties are
\begin{gather} \eta(\phi X)=0, \ \phi^2 X = -X+\ep\eta(X)\xi, \nonumber \\
g(\phi X,\phi Y) =g(JX-\ep\eta(X)N,JY-\ep\eta(Y)N) =g(X,Y)-\ep\eta(X)\eta(Y), \label{structureEq} 
\\ 
g(\phi X,Y)+g(X,\phi Y)=0, \ 
\phi\xi=0, \  \eta(\xi)=\ep, \nonumber \\
(\nabla_X\phi )Y=\varepsilon g(Y, \xi)AX-\varepsilon g(AX,Y)\xi, \nonumber
\end{gather}
for any $X,Y\in TM$. Next, if $\tilde{\nabla}$ and $\nabla$ are the Levi-Civita connections of $\mathbb{C}P^n_p$ and $M$, respectively, we have the Gauss and Weingarten formulae:
\begin{equation}\label{GaussWeingarten}
\tilde\nabla_XY=\nabla_XY+\ep g(AX,Y)N, \quad
\tilde\nabla_X N  = -AX,\end{equation}
for any $X,Y\in TM$, where $A$ is the shape operator associated with   $N$. Also, 
\begin{equation}\label{nablaxxi}
 \nabla_X\xi=\phi AX, 
\end{equation}
for any $X\in TM$. 
The Codazzi equation is
\begin{align}\label{cdz}
(\nabla_XA)Y-(\nabla_YA)X = \eta(X)\phi Y-\eta(Y)\phi X+2g(X,\phi Y)\xi,
\end{align}
for any $X,Y\in TM$. Let $R$ be the curvature operator of $M$. Then, by using \eqref{RR}, \eqref{GaussWeingarten}  and the structure Gauss equation \cite[pag. 100]{ONeill}, we obtain 
\begin{align*}
R(X,Y)Z=&g(Y,Z)X-g(X,Z)Y+g(\phi Y,Z)\phi X-g(\phi X,Z)\phi Y\\
& -2g(\phi X,Y)\phi Z +\ep g(AY,Z)AX -\ep g(AX,Z)AY,
\end{align*}
for any $X,Y,Z\in TM$.
\section{Some Axioms of Planes} \label{axioms-of-planes}

The following 3 lemmatta state that $\cpn$ satisfies certain types of \textit{axioms of planes}. Essentially, we show the existence of some families of totally geodesic submanifolds, passing through a given point and tangent to some tangent planes. The ideas underneath come from the fact that totally geodesic submanifolds of some model spaces embedded in flat $\RR^n_m$ can be computed by  intersecting these models with linear subspaces. For more details, see \cite{ONeill}.

\begin{lemma}\label{firstlemma}
Let $x\in \mathbb{C}P^n_p$ and consider a holomorphic, non-degenerate hyperplane $\Pi\subset T_x\mathbb{C}P^n_p$ of dimension $\dim_{\mathbb{R}}\Pi=2n-2$.  The following hold:
\begin{enumerate}
\item There exists a totally geodesic submanifold $L$ in $\cpn$ such that $x\in L$ and $T_xL=\Pi$. In addition, $L$ is isometric to $\mathbb{C}P^{n-1}_t$, where $t=p$ if $\Pi^{\perp}$ is definite positive  or $t=p-1$ if $\Pi^{\perp}$ is definite negative. 

\item Any two totally geodesic submanifolds $L_1$ and $L_2$ as above are linked by an isometry of $\cpn$. 
\end{enumerate}
\end{lemma}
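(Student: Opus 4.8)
\emph{Proof idea.} The plan is to lift the data to the pseudo-sphere $\sn$, settle the problem there by linear algebra in $\cn$, and then push everything down through the submersion $Pi$.

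Fix $q\in\sn$ with $Pi(q)=x$ and recall that $dPi_q$ restricts to a complex-linear isometry from the horizontal space $\mathcal{H}_q:=(\mathrm{Ker}(dPi_q))^{\perp}=(\mathrm{span}\{J\vec q\})^{\perp}\cap T_q\sn$ --- equivalently, the $g$-orthogonal complement of $\CC\vec q$ in $\cn$ --- onto $T_x\cpn$. Set $\tilde\Pi:=(dPi_q)^{-1}(\Pi)\subset\mathcal{H}_q$, a non-degenerate $J$-invariant real subspace of dimension $2n-2$, and $\ell:=(dPi_q)^{-1}(\Pi^{\perp})$, a complex line, so that $\mathcal{H}_q=\tilde\Pi\oplus\ell$ orthogonally. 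Put $W:=\CC\vec q\oplus\tilde\Pi\subset\cn$, a complex linear subspace of complex dimension $n$ with $q\in W$ and $W^{\perp}=\ell$. Since $g(q,q)=1$, the plane $\CC\vec q$ is positive definite, so every negative direction of $\cn$ lies in $\mathcal{H}_q\cong\Pi\oplus\Pi^{\perp}$; hence $W$ is non-degenerate, with real index $2p$ if $\Pi^{\perp}$ is definite positive and $2p-2$ if $\Pi^{\perp}$ is definite negative. In other words $(W,g_{\CC})$ is linearly isometric to $\mathbb{C}^{n}_{t}$ with $t$ as in the statement, and $\mathbb{S}(W):=W\cap\sn$ --- which is $\mathbb{S}^1$-invariant because $W$ is $J$-invariant, and contains $q$ --- is linearly isometric to $\mathbb{S}^{2n-1}_{2t}$.

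I would then check that $\mathbb{S}(W)$ is totally geodesic in $\sn$: for $X,Y$ tangent to $\mathbb{S}(W)$ (hence $W$-valued along it), the Gauss formula \eqref{GaussEq} gives $\hat\nabla_XY=D_XY+\langle X,Y\rangle\chi$, where $D_XY$ stays in the fixed subspace $W$ and the position field $\chi$ is $W$-valued along $\mathbb{S}(W)$; since $\hat\nabla_XY$ is tangent to $\sn$, it is in fact tangent to $\mathbb{S}(W)$. Hence $L:=Pi(\mathbb{S}(W))$ is an embedded submanifold of $\cpn$, $Pi$ restricts to a semi-Riemannian submersion $\mathbb{S}(W)\to L$, and this identifies $L$ with $\mathbb{S}^{2n-1}_{2t}/\mathbb{S}^1=\mathbb{C}P^{n-1}_{t}$, carrying holomorphic sectional curvature $4$. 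Moreover $x=Pi(q)\in L$, and the horizontal space of $Pi|_{\mathbb{S}(W)}$ at $q$ is $\mathcal{H}_q\cap W=\tilde\Pi$, on which $dPi_q$ is an isometry onto $\Pi$, so $T_xL=\Pi$. Finally, $L$ is totally geodesic in $\cpn$: a horizontal geodesic of $\mathbb{S}(W)$ is a geodesic of $\sn$ by total geodesy, hence projects to a geodesic of $\cpn$ by O'Neill's submersion formulae, and these projections are exactly the geodesics of $L$; thus $\tilde\nabla$ and the Levi-Civita connection of $L$ agree on fields tangent to $L$. This proves the first assertion.

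For the second, write $L_i=Pi(\mathbb{S}(W_i))$ with complex hyperplanes $W_i\subset\cn$ as above; since $\Pi_1^{\perp}$ and $\Pi_2^{\perp}$ have the same causal character, the pseudo-Hermitian forms induced on $W_1$ and $W_2$ have the same signature, so Witt's extension theorem yields a $\CC$-linear isometry $F$ of $(\cn,g_{\CC})$ with $F(W_1)=W_2$. Being $\CC$-linear and isometric, $F$ preserves $\sn$ and commutes with the $\mathbb{S}^1$-action, hence descends to an isometry $\bar F$ of $\cpn$ with $\bar F(L_1)=Pi(F(\mathbb{S}(W_1)))=Pi(\mathbb{S}(W_2))=L_2$. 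The step I expect to demand the most care is the passage from ``$\mathbb{S}(W)$ is totally geodesic in $\sn$'' to ``$L$ is totally geodesic in $\cpn$'', which has to be argued through horizontality under the submersion (equivalently, via O'Neill's formula relating the second fundamental forms of $L$ in $\cpn$ and of $\mathbb{S}(W)$ in $\sn$, both of which vanish); the index bookkeeping, though routine, must also be done carefully in order to pin down the value of $t$.
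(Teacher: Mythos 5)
Your argument is correct, and it ends up producing the same submanifold as the paper --- your $W=\CC\vec q\oplus\tilde\Pi$ is precisely the image, under the adapted unitary matrix the paper builds, of the coordinate subspace behind the standard models --- but the verification is organized along a genuinely different route. The paper argues by homogeneity: it fixes the standard totally geodesic models $Pi(\mathbb{S}^{2n-1}_{2t})\subset\cpn$ (whose total geodesy is taken as clear from the coordinate embeddings $\chi_{\pm}$), constructs an explicit element of $SU(p,\bar p)$ adapted to $q$ and $\hat\eta$ carrying the model's base point and tangent hyperplane to $(x,\Pi)$, defines $L$ as the image of the model under the induced holomorphic isometry, and obtains item 2 by composing two such isometries. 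You instead build $L=Pi(W\cap\snp)$ directly and prove its properties by hand: total geodesy of $W\cap\snp$ in $\snp$ via the Gauss formula, descent to total geodesy of $L$ in $\cpn$ via O'Neill's horizontal-geodesic argument, and the signature count giving $t$; for item 2 you invoke Witt's extension theorem for the Hermitian form to produce the ambient unitary map rather than composing explicitly constructed isometries. Your route makes explicit two points the paper leaves implicit (why the model hyperplanes are totally geodesic, and why the quotient has index $2t$), at the cost of the submersion bookkeeping; the paper's route is shorter because total geodesy is transported rather than re-proved. Note also that, exactly as in the paper's own proof, your item 2 tacitly restricts to the case where $\Pi_1^{\perp}$ and $\Pi_2^{\perp}$ have the same causal character --- you state this hypothesis explicitly, and it is indeed needed for the congruence statement, in line with the formulations of Lemmas \ref{totgeosurf} and \ref{3submanifolds}.
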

\begin{proof} \textit{Step 1:} 
We start by pointing out the following totally geodesic embeddings:
\begin{align*} &\chi_{+}: \mathbb{S}_{2p}^{2n-1} \to \snp, \ (z_1,\ldots,z_n)\mapsto (z_1,\ldots,z_n,0), \\ 
& \chi_{-}:\mathbb{S}_{2p-2}^{2n-1} \to \snp, \ (z_1,\ldots,z_n)\mapsto (0,z_1,\ldots,z_n).
\end{align*}
It is quite clear that we obtain the following totally geodesic embeddings:
\[ \mathbb{C}P_{p}^{n-1}\to \cpn, \quad \mathbb{C}P_{p-1}^{n-1}\to \cpn.\]
If we denote $o=(0,\ldots,0,1)\in\mathbb{S}_{2p}^{2n-1}$, $Q_+=(0,\ldots,0,1),Q_{-}=(1,0,\ldots,0)\in\mathbb{C}_{p}^{n+1}$, then 
\begin{align*}
& T_o\mathbb{S}_{2p}^{2n-1} = \{X\in\mathbb{C}^n_p : \mathrm{Re}(X_n)=0\}, \
T_{(o,0)}\mathbb{S}_{2p}^{2n+1} = \{X\in\mathbb{C}^{n+1}_p : \mathrm{Re}(X_n)=0\}, \\
& (d\chi_{+})_o(X)=(X,0), \ \forall X\in T_o\mathbb{S}_{2p}^{2n-1},\\
& (d\chi_+)_o\big( T_o\mathbb{S}_{2p}^{2n-1}\big)  = \{Z\in \mathbb{C}^{n+1}_p :\mathrm{Re}(Z_{n})=0, Z_{n+1}=0\} = 
\mathrm{Span}\{(o,0),Q_+,JQ_+\}^{\perp}.
\end{align*}
There are similar expressions for $\mathbb{S}_{2p-2}^{2n-1}$, using $Q_{-}$.

\textit{Step 2:} Take $x\in\cpn$, and a non-degenerate $\Pi\subset T_{x}\cpn$, such that $J\Pi=\Pi$, $\dim_{\RR}\Pi=2n-2$. Consider ${q}\in\snp$   such that $\pi({q})=x$. There exists a unit $\eta\in T_{x}\cpn$ such that $\Pi^{\perp}=\mathrm{Span}\{\eta,J\eta\}$. Let $\hat{\eta}$ be its horizontal lift via $\pi$. Construct an  $\RR$-orthonormal basis $B=(v_{1},Jv_{1},\ldots,v_{n+1},Jv_{n+1})$ in $\CC^{n+1}_p$ in the following way. If $\hat\eta$ is space-like, then $v_{n}={q}$, $v_{n+1}=\hat\eta$. If $\hat\eta$ is time-like, then $v_{n}={q}$, $v_1=\hat\eta$. Define the matrix $M$ such that its $k^{th}$ column is $v_k$. 
If $\det(M)=-1$, we change a suitable $v_k$ by $-v_k$. 
Since $B$ is a real orthonormal basis, it is easy to check that $M\in SU(p,\bar{p})=\{M\in \mathcal{M}_{n+1}(\mathbb{C}) \, | \, \bar{M}^tI_{p,\bar p}M=I_{p,\bar p}, \, \det M=1 \}$, for $\bar p=n+1-p$, {where $I_{p,\bar{p}}$ is the matrix $I_{p,\bar{p}}=\mathrm{Diagonal}(-1,\stackrel{(p}{\ldots},-1,1,\stackrel{(\bar{p}}{\ldots},1)$.} The associated linear map $f_{M}:\cn\to \cn$ can be restricted to $\snp$ as an isometry, satisfying $f_M(o,0)={q}$, $(df_M)_{(o,0)}Q_{\pm}=\hat\eta$. In addition, it can be projected to $\cpn$, obtaining a holomorphic isometry $f:\cpn\to\cpn$ such that $f(\pi(o,0))=x$ and $f_*(\pi_*(Q_{\pm}))=\eta$. 

\textit{Step 3:} The desired totally geodesic submanifold is $L_x=f(\pi(\mathbb{S}_t^{2n-1}))$, for $t=p$ if $\eta$ is space-like or $t=p-1$ if $\eta$ is time-like This totally geodesic submanifold is unique at the point $x$, up to the chosen hyperplane. By composing two of the above isometries, any two totally geodesic submanifolds as above will be linked by a holomorphic isometry. \end{proof} 

We provide a list of totally real, totally geodesic surfaces in $\cpn$. In all cases, we denote $\hat{\Sigma}$ the surface embedded in $\mathbb{S}^{2n+1}_{2p}$, and its projection $\Sigma=\pi(\hat{\Sigma})$, as in the following commutative diagram
\begin{center}
\begin{tikzcd}
\hat{\Sigma}  \arrow[r, "t.g.\ \hat{\varphi}", description]   \arrow[d, "\pi", description ]  & 
\mathbb{S}^{2n+1}_{2p}  \arrow[d, "\pi", description]   \\
\Sigma  \arrow[r, "t.g.\ \varphi", description] & \mathbb{C}P^n_p  \\
\end{tikzcd}
\end{center}
\vspace{-1em} We will follow \cite[p.110]{ONeill}. {We also recall that a map $f:(\bar{M},\bar{g})\to(\bar{M},\bar{g})$ is an \textit{anti-isometry} if for each $p\in\bar{M}$ and each $u,v\in T_p\bar{M}$, it holds $g_p(u,v)=-g_{f(p)}(f_*u,f_*v)$. As a consequence, the sectional curvatures of $2$-planes linked by $f$ have different signs.} 

As above, all embeddings in $\cpn$ are totally real and totally geodesic.

\begin{enumerate}
\item[$\mathbb{R}P^2$)] For $n\geq 3$, $1\leq p\leq n-2$, the round sphere $\hat{S}_1=\{(x,y,z)\in\RR^3 : x^2+y^2+z^2=1 \}$, and 
$\hat{\varphi}_1:\hat{S}_1\to \mathbb{S}^{2n+1}_{2p}$, $(x,y,z)\mapsto (0,\ldots,0,x,y,z)$. By the classical quotient $\mathbb{R}P^2=\hat{S}_1/\{\pm I_3\}$, $I_3$ the identity matrix of order $3$, we get an embedding of the Riemannian  real projective plane $\varphi_1:\mathbb{R}P^2\to \cpn$. 

\item[$\mathbb{H}_2^2$)] For $n\geq 3$, $2\leq p\leq n-1$, let $\hat{S}_2	=\{(x,y,z)\in\RR^3_2 : -x^2-y^2+z^2=1 \}$, and $\hat{\varphi}_2:\hat{S}_2\to \mathbb{S}^{2n+1}_{2p}$, $(x,y,z)\mapsto (x,y,0,\ldots,0,z)$. 
Take the classical quotient $\mathbb{H}_2^2:=\hat{S}_2/\{\pm I_3\}$. Then, we get $\varphi_2:\mathbb{H}_2^2\to\cpn$. By Lemma 24 of \cite[p.110]{ONeill}, $\mathbb{H}_2^2$ is anti-isometric to the standard real hyperbolic plane.

\item[$\mathbf{S}^2_1$)] For $n\geq 2$, $1\leq p\leq n-1$, take $\hat{S}=\{(x,y,z)\in\RR^3_1 : -x^2+y^2+z^2=1\}$. Its universal covering is the \textit{de Sitter} 2-plane. The embedding $\hat{\varphi}_3:\hat{S} \to \mathbb{S}^{2n+1}_{2p}$, $(x,y,z)\mapsto (x,0,\ldots,0,y,z)$, induces $\varphi_3:\mathbf{S}^2_1=\hat{S}/\{\pm I_3\}\to \cpn$. 
\end{enumerate}
\begin{lemma} \label{totgeosurf} 
Given $x\in \cpn$, consider a non-degenerate, totally real, 2-plane $\Pi\subset T_x\cpn$. 
\begin{enumerate}
\item There exists a non-degenerate, totally real, totally geodesic surface $\Sigma\subset\cpn$ such that $x\in \Sigma$ and $T_x\Sigma = \Pi$.
\item Any two surfaces $\Sigma_1$ and $\Sigma_2$ as in item 1 are linked by an isometry of $\cpn$ if, and only if, they  have one causal character among the previous cases $\mathbb{R}P^2$, $\mathbb{H}_2^2$ and $\mathbf{S}^2_1$.
\end{enumerate}
\end{lemma}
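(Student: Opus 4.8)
The plan is to follow closely the pattern of the proof of Lemma~\ref{firstlemma}, replacing the holomorphic hyperplane by one of the three totally real model surfaces listed above, and then to read off the classification from the causal character of $\Pi$. Write $\hat S_\bullet$ for whichever of $\hat S_1$, $\hat S_2$, $\hat S$ is to be used in a given case. First I would pick $q\in\snp$ with $Pi(q)=x$ and lift $\Pi$ to its horizontal representative $\hat\Pi\subset T_q\snp$; since $dPi_q$ is a $\CC$-linear isometry on horizontal vectors, $\hat\Pi$ is again non-degenerate and totally real and has the same signature as $\Pi$. Using $g(JU,V)=-g(U,JV)$ together with $\hat\Pi\perp\mathrm{Span}\{q,Jq\}$, one checks that $q,Jq$ and a $J$-adapted orthonormal frame of $\hat\Pi\oplus J\hat\Pi$ form a $g$-orthogonal system, with $q$ spacelike.

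Next I would split into three cases according to the signature of $\Pi$. If $\Pi$ is positive (respectively negative) definite, an orthonormal basis of $\hat\Pi$ consists of two spacelike (respectively two timelike) unit vectors and the relevant model is $\mathbb{R}P^2$ (respectively $\mathbb{H}_2^2$); if $\Pi$ is Lorentzian, the basis has one spacelike and one timelike unit vector and the model is $\mathbf{S}^2_1$. In each case I would complete $\{q\}$ together with that orthonormal basis of $\hat\Pi$ to a $J$-adapted real orthonormal basis $B=(v_1,Jv_1,\ldots,v_{n+1},Jv_{n+1})$ of $\cn$, putting the timelike complex directions in the first $p$ slots and placing $q$ and the frame of $\hat\Pi$ in exactly the positions occupied by the distinguished point and by the horizontal lift of the tangent plane of $\hat S_\bullet$ at its base point. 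A dimension-and-index count shows that this completion is possible precisely under the numerical hypotheses attached to the chosen model; in particular, a given causal character of $\Pi$ does not occur at all when those hypotheses fail. Exactly as in Lemma~\ref{firstlemma}, the matrix $M$ whose $k$-th column is $v_k$ belongs to $SU(p,\bar p)$ after possibly replacing one free $v_k$ by $-v_k$, its associated linear map restricts to an isometry of $\snp$, and it projects to a holomorphic isometry $f$ of $\cpn$ sending the base point of the model to $x$ and the lifted tangent plane to $\hat\Pi$. Then $\Sigma:=f\bigl(Pi(\hat S_\bullet)\bigr)$ is a totally real, totally geodesic surface through $x$ with $T_x\Sigma=\Pi$, isometric via $f$ to the corresponding model; this gives item~1.

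For item~2, the ``only if'' direction is immediate: an isometry of $\cpn$ carrying $\Sigma_1$ onto $\Sigma_2$ restricts to an isometry between them, hence preserves the signature of the induced metric, and this signature is constant on each connected $\Sigma_i$ and equals $(2,0)$, $(0,2)$ or $(1,1)$ according as the surface is of type $\mathbb{R}P^2$, $\mathbb{H}_2^2$ or $\mathbf{S}^2_1$. Conversely, if $\Sigma_1$ and $\Sigma_2$ share the same causal character, I would choose $x_i\in\Sigma_i$ with $\Pi_i=T_{x_i}\Sigma_i$ and apply item~1 to obtain holomorphic isometries $f_i$ with $f_i\bigl(Pi(\hat S_\bullet)\bigr)$ a totally geodesic surface through $x_i$ tangent to $\Pi_i$; since a totally geodesic surface is determined, near each of its points, by that point and its tangent plane there, $f_i\bigl(Pi(\hat S_\bullet)\bigr)=\Sigma_i$, so $f_2\circ f_1^{-1}$ is the required isometry. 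The step I expect to be the main obstacle is the combinatorial bookkeeping that matches each causal character of $\Pi$ with a unique model and shows that the adapted basis can be completed exactly when the dimension and index conditions of that model hold; a secondary delicate point is justifying, in the converse of item~2, that a connected totally geodesic surface in $\cpn$ is determined by a point together with its tangent plane there.
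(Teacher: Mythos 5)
Your proposal is correct and follows essentially the same route as the paper: the paper identifies the three model surfaces (via the constant curvature $+1$ of totally real, totally geodesic submanifolds) and then explicitly finishes "by a similar technique to Lemma \ref{firstlemma}", which is precisely the adapted-basis/$SU(p,\bar p)$ construction you carry out in detail, together with the signature argument for item 2. The points you flag as delicate (the index bookkeeping and the local determination of a totally geodesic surface by a point and tangent plane) are exactly the steps the paper leaves implicit, so there is no substantive divergence.
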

\begin{proof} Since $\cpn$ has constant holomorphic sectional curvature $+4$, totally real, totally geodesic submanifolds have constant sectional curvature $+1$. Then, we look for the model spaces of surfaces with constant Gaussian curvature $K=+1$, with index $0$, $1$ and $2$. These are described in the previous list. Finally, the proof can be finished by a similar technique to  Lemma \ref{firstlemma}. 
\end{proof}

Next, we show the list of totally real, totally geodesic, 3-dimensional submanifolds in $\cpn$ with index 1 or 2. We will not repeat that all the embeddings will be totally real and totally geodesic. The ideas are very similar to the previous list of surfaces. 
\begin{enumerate}[($\mathbf{B}^3_1$)]
\item For $n\geq 3$, $1\leq p\leq n-2$, take the de Sitter space $\hat{\psi}_1:dS^3=\{p=(x_1,x_2,x_3,x_4)\in \mathbb{R}^4_1 : -x_1^2+x_2^2+x_3^2+x_4^2=1\}\to \mathbb{S}_{2p}^{2n+1}$, $p\mapsto (x_1,0,\ldots,0,x_2,x_3,x_4)$. This induces $\mathbf{B}^3_1=dS^3/\{\pm I_4\}\to \cpn$, where $I_4$ is the identity matrix of order $4$. The index of $\mathbf{B}_1^3$ is $1$. 

\item For $n\geq 3$, $2\leq p\leq n-1$, take the hyperquadric $Q^3_2=\{p=(x_1,x_2,x_3,x_4)\in\RR^4_2 : -x_1^2-x_2^2+x_3^2+x_4^2=1\}\to \mathbb{S}^{2n+1}_{2p}$ so we obtain  
$\mathbf{B}^3_2=Q^3_2/\{\pm I_4\}\to \cpn$. The index is 2. By Lemma 24 \cite[p.110]{ONeill}, 
 $\mathbf{B}^3_2$ is locally anti-isometric to the \textit{anti-de Sitter} 3-space. 
\end{enumerate}
\begin{lemma}\label{3submanifolds} 
Given $x\in \cpn$, consider a non-degenerate, totally real 3-plane $\Pi\subset T_x\cpn$ of index 1 or 2. 
\begin{enumerate}
\item There exists a non-degenerate, totally real, totally geodesic, 3-submanifold $B\subset\cpn$ such that $x\in B$ and $T_xB = \Pi$.
\item Any two 3-submanifold $B_1$ and $B_2$ as in item 1 are linked by an isometry of $\cpn$ if, and only if, they  have the same causal character, among the previous cases $\mathbf{B}^3_1$ and $\mathbf{B}^3_2$.
\end{enumerate}
\end{lemma}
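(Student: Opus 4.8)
The plan is to imitate closely the proofs of Lemmata \ref{firstlemma} and \ref{totgeosurf}: first recognise the only possible models, then transport an arbitrary configuration to a standard one by a holomorphic isometry of $\cpn$ built from a matrix in $SU(p,\bar p)$. For the first step, since $\cpn$ has constant holomorphic sectional curvature $+4$, any totally real, totally geodesic submanifold has constant sectional curvature $+1$; hence a non-degenerate such $3$-submanifold of index $j\in\{1,2\}$ is locally isometric to the unique connected, simply connected pseudo-Riemannian space form of dimension $3$, index $j$ and curvature $+1$ --- the de Sitter space $dS^3$ if $j=1$ and the quadric $Q^3_2$ (locally anti-isometric to anti-de Sitter $3$-space) if $j=2$. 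These are exactly the models $\mathbf{B}^3_1$ and $\mathbf{B}^3_2$ above, together with their fixed totally geodesic embeddings $\hat\psi_j$ into $\snp$ and the induced ones into $\cpn$.

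For item 1, I would fix $q\in\snp$ with $Pi(q)=x$ and let $\widehat\Pi\subset T_q\snp$ be the horizontal lift of $\Pi$, a non-degenerate $3$-plane of index $j$ orthogonal to $q$ and to $Jq$. Total reality of $\Pi$ gives $J\widehat\Pi\perp\widehat\Pi$, and horizontality gives $J\widehat\Pi\perp q$; together with $Jq\perp q$ this shows that $W:=\RR q\oplus\widehat\Pi$ and $JW$ are mutually orthogonal, so $W\oplus JW$ is a $J$-invariant complex subspace of $\cn$ of complex dimension $4$ and complex index $j$. A short count shows that the hypotheses $1\le p\le n-2$ (for $j=1$) and $2\le p\le n-1$ (for $j=2$) are precisely what is needed for its orthogonal complement --- of complex dimension $n-3$ and complex index $p-j$ --- to be an honest non-degenerate complex subspace of $\cn$, so that such a $\Pi$ can indeed occur. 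Then, exactly as in Lemma \ref{firstlemma}, I would assemble an $\RR$-orthonormal basis $B=(v_1,Jv_1,\dots,v_{n+1},Jv_{n+1})$ of $\cn$ whose leading vectors are $q$ and an orthonormal basis of $\widehat\Pi$, placed in the coordinate slots and with the causal characters dictated by $\hat\psi_j$; replacing one $v_k$ by $-v_k$ if needed, the matrix $M$ with columns $v_k$ lies in $SU(p,\bar p)$, $\bar p=n+1-p$. The associated $f_M$ restricts to an isometry of $\snp$ taking the base point of $\hat\psi_j$ (which may be chosen as $o=(0,\dots,0,1)$) to $q$ and the standard tangent $3$-plane to $\widehat\Pi$, and it projects to a holomorphic isometry $f$ of $\cpn$; the submanifold $B:=f(Pi(dS^3))$, respectively $f(Pi(Q^3_2))$, is then totally real, totally geodesic, passes through $x$ and satisfies $T_xB=\Pi$.

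For item 2, the ``if'' direction is immediate: composing the straightening isometry for $B_1$ with the inverse of the one for $B_2$ yields an isometry of $\cpn$ carrying $B_1$ onto $B_2$. For the ``only if'' direction, an isometry of $\cpn$ is a linear isometry on every tangent space and therefore preserves the index of tangent $3$-planes, and it maps totally geodesic submanifolds isometrically onto totally geodesic submanifolds; since under our hypothesis the index of such a $3$-submanifold is $1$ or $2$, and these two values correspond bijectively to the models $\mathbf{B}^3_1$ and $\mathbf{B}^3_2$, any isometry between $B_1$ and $B_2$ forces them to have the same causal character.

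I expect the only real work --- just as in Lemma \ref{firstlemma} --- to be the linear-algebraic bookkeeping ensuring that $W\oplus JW$ has exactly the signature of the chosen model and embeds in $\cn$ (this is where the bounds on $p$ are used) and the sign adjustment guaranteeing $M\in SU(p,\bar p)$ rather than only $U(p,\bar p)$; the remainder is a routine transcription of the earlier arguments, so I would present it briefly.
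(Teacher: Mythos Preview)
Your proposal is correct and follows essentially the same approach as the paper: identify the model spaces via the constant-curvature argument (totally real, totally geodesic $\Rightarrow$ sectional curvature $+1$, hence locally $dS^3$ or $Q^3_2$ according to the index), and then finish ``as in Lemma~\ref{firstlemma}'' by building a holomorphic isometry from a matrix in $SU(p,\bar p)$. Your write-up is in fact more detailed than the paper's (which simply says ``We finish the proof again as in Lemma~\ref{firstlemma}''), in particular your explicit analysis of $W\oplus JW$ and the signature/dimension count tying in the bounds on $p$ are useful additions not spelled out in the original.
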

\begin{proof} 
Since the index has to be 1 or 2, we consider the family of simply connected, 3-manifolds of constant sectional curvature of index 1 or 2. We can use \cite[p.110]{ONeill} to construct them. When the index is 1, then it is (locally) the de Sitter 3-space. When the index is 2, then it is (locally) $Q_2^3$. The previous list provides some model immersions. 

We finish the proof again as in Lemma \ref{firstlemma}. 	
\end{proof}
\color{black}

\section{Ruled real hypersurfaces}\label{ruled}
In this section we will first give the definition of \textit{ruled real hypersurfaces} and present some basic characterizations. We will determine the form of the shape operator and then we will discuss the parametrization of ruled real hypersurfaces.

Let $M$ be a hypersurface in $\cpn$ and let $q\in M$. The maximal holomorphic distribution is defined as  $\mathcal{D}_q:=T_qM\cap JT_qM=\mathrm{span}\{\xi_q\}^{\perp}$.

\begin{definition}
	A non-degenerate real hypersurface $M$ in $\cpn$ is \textrm{ruled} if, and only if, the maximal holomorphic distribution $\mathcal D$ is integrable with totally geodesic leaves in $\cpn$.
\end{definition}

\begin{lemma}
Let $M$ be a non-degenerate  real hypersurface. Then $M$ is ruled if, and only if, $g(AX,Y)=0,$ $\forall  X,Y\in \mathcal{D}$.
\end{lemma}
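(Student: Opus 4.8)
The plan is to characterize the ruled condition purely in terms of the second fundamental form by exploiting the classical correspondence between a distribution being integrable with totally geodesic leaves and the vanishing of the relevant components of the shape operator. First I would fix $q\in M$ and note that, since $\dim\mathcal{D}=2n-2$ and $\mathcal{D}_q$ is non-degenerate (the hypersurface is non-degenerate and $\xi$ is non-null), the decomposition $T_qM=\mathcal{D}_q\oplus\mathrm{span}\{\xi_q\}$ is orthogonal and non-degenerate, so it suffices to control $g(AX,Y)$ for $X,Y\in\mathcal{D}$. The bilinear form $(X,Y)\mapsto g(AX,Y)$ restricted to $\mathcal{D}$ is symmetric, so it is enough to handle the case $Y=X$ by polarization; but I would keep both vectors generic to avoid sign subtleties in the semi-Riemannian setting.

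The key computational step is to relate $g(AX,Y)$ for $X,Y\in\mathcal{D}$ to two things simultaneously: (i) the integrability of $\mathcal{D}$, i.e.\ $[X,Y]\in\mathcal{D}$, and (ii) the leaves being totally geodesic in $\cpn$. For integrability, I would use \eqref{nablaxxi}: for $X,Y\in\mathcal{D}$ one has $g([X,Y],\xi)=g(\nabla_XY,\xi)-g(\nabla_YX,\xi)=-g(Y,\nabla_X\xi)+g(X,\nabla_Y\xi)=-g(Y,\phi AX)+g(X,\phi AY)$. Using $g(\phi U,V)+g(U,\phi V)=0$ and the symmetry of $A$, this reduces to $g([X,Y],\xi)=g(A\phi X,Y)+g(AX,\phi Y)$ up to sign — I would compute this carefully — so that a slightly more refined argument is needed than just reading off one equation. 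For the totally geodesic condition, given a leaf $L$ (an integral submanifold of $\mathcal{D}$), its second fundamental form $\sigma^L$ in $\cpn$ splits into the normal directions $N$ and $\xi$: for $X,Y$ tangent to $L$, $\tilde\nabla_XY=\nabla^L_XY+(\text{component along }\xi)+(\text{component along }N)$. By the Weingarten formula \eqref{GaussWeingarten}, the $N$-component is $\ep g(AX,Y)N$, and the $\xi$-component, computed from $g(\tilde\nabla_XY,\xi)=-g(Y,\tilde\nabla_X\xi)=-g(Y,\nabla_X\xi)=-g(Y,\phi AX)$, is governed again by $A$. So $L$ is totally geodesic in $\cpn$ if and only if both $g(AX,Y)=0$ and $g(\phi AX,Y)=0$ for all $X,Y$ tangent to $L$.

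For the forward direction, assuming $M$ ruled: integrability plus totally geodesic leaves forces, by the displayed computations, $g(AX,Y)=0$ for all $X,Y\in\mathcal{D}$ directly from the vanishing $N$-component of $\sigma^L$. For the converse, assuming $g(AX,Y)=0$ for all $X,Y\in\mathcal{D}$: I would first show $A\mathcal{D}\subseteq\mathrm{span}\{\xi\}$, i.e.\ $AX=\ep g(AX,\xi)\xi$ for $X\in\mathcal{D}$, since $g(AX,Z)=0$ for all $Z\in\mathcal{D}$ and the complement of $\mathcal{D}$ is spanned by $\xi$. Then $g([X,Y],\xi)=g(A\phi X,Y)+g(AX,\phi Y)$; since $\phi X,\phi Y\in\mathcal{D}$ (because $\phi$ preserves $\mathcal{D}$) and $A$ maps $\mathcal{D}$ into $\mathrm{span}\{\xi\}$ with $g(\xi,\mathcal{D})=0$, both terms vanish, giving integrability of $\mathcal{D}$. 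Finally, for a leaf $L$ and $X,Y$ tangent to it, the $N$-component of $\sigma^L$ is $\ep g(AX,Y)N=0$ and the $\xi$-component is $-g(\phi AX,Y)$; writing $AX=\ep g(AX,\xi)\xi$ and using $\phi\xi=0$ gives $\phi AX=0$, so this component also vanishes. Hence $L$ is totally geodesic in $\cpn$ and $M$ is ruled.

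The main obstacle I anticipate is bookkeeping with the causal characters and signs: in the semi-Riemannian setting one must be careful that $\mathcal{D}_q$ is genuinely non-degenerate (which follows from $\ep=g(\xi,\xi)=\pm1$ and non-degeneracy of $g$ on $T_qM$), that $\phi$ indeed restricts to an endomorphism of $\mathcal{D}$ (immediate from $\eta(\phi X)=0$), and that the identity $\phi AX=0$ for $X\in\mathcal{D}$ really does kill the $\xi$-component of the leaf's second fundamental form. None of these steps is deep, but getting every sign right in equations like $g([X,Y],\xi)=g(A\phi X,Y)+g(AX,\phi Y)$ requires care, and that is where I would concentrate the write-up.
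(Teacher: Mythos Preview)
Your proposal is correct and follows essentially the same approach as the paper's proof. The only cosmetic difference is that the paper, for the converse, shows directly that $\tilde{\nabla}_XY\perp N$ and $\tilde{\nabla}_XY\perp\xi$ (hence $\tilde{\nabla}_XY\in\mathcal{D}$, giving integrability and total geodesicity in one stroke), whereas you organize the same computations as the vanishing of the $N$- and $\xi$-components of the leaf's second fundamental form; the underlying identities are identical.
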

\begin{proof} Assume that $M$ is ruled. Let $L$ be a totally geodesic leaf of $\mathcal{D}$ in $\cpn$. We call $\nabla^L$ its Levi-Civita connection. 
By Gauss equation, since $L\subset \cpn$, for any $X,Y\in TL$, we have $\tilde{\nabla}_XY=\nabla^L_XY$, which proves that $\tilde{\nabla}_XY\in TL$. Moreover, since $\mathcal{D}\subset TM\subset \cpn$ and  $ L\subset M\subset\cpn$, we have that $$\tilde{\nabla}_XY=\nabla_XY+\ep g(AX,Y)N.$$
As $N$ is not a light-like vector, we take the inner product of the above equation with $N$, and using that $N\perp\dd$, $TL\subset \dd$, we obtain $g(\tilde{\nabla}_XY, N)=g({\nabla}_XY,N)+ \ep ^2 g(AX,Y)$, that is to say, $0=g(AX,Y)$.

We assume now that $g(AX,Y)=0,$ $\forall  X,Y\in \mathcal{D}$. We easily obtain $0=g(AX,Y)=-g(\tilde{\nabla}_XN,Y)=g(N,\tilde{\nabla}_XY)$, for any $X,Y\in \mathcal{D}$, which implies that $\tilde{\nabla}_XY\in TM$.
We know that $JY=\phi Y\in \dd$, for any $Y\in \dd$. It follows that 
$0=g(AX,\phi Y)=-g(\tilde{\nabla}_XN,JY)=g(N, \tilde{\nabla}_XJY)=g(N, J\tilde{\nabla}_X Y)=g(\xi, \tilde{\nabla}_X Y).$ 
Therefore, 	$\tilde{\nabla}_X Y$ is tangent to $M$ and orthogonal to $\xi$, which implies that $\tilde{\nabla}_X Y\in \dd$. Similarly, we show that $\tilde{\nabla}_YX\in \dd$. Hence, for $X,Y\in \dd$, $[X,Y]=\tilde{\nabla}_X Y-\tilde{\nabla}_YX\in \dd$ and so $\dd$ is an integrable distribution. Next, let us see that the leaves of $\dd$ are totally geodesic. Take $L$ as one of them, that is, $L$ is a submanifold of $\cpn$ such that for any $p\in L$, we have $T_pL=\dd_p$. Let $\nabla^{L}$ and $\sigma$ be the Levi-Civita connection on $L$ and the second fundamental form of $L$ in $\cpn$. Therefore, for $X,Y\in TL$, the Gauss equation writes as 
\begin{align*}
\tilde{\nabla}_XY=\nabla^{L}_XY+\sigma(X,Y).
\end{align*}
In fact, for $X,Y\in \Gamma(TL)$, we have that $ \nabla^{L}_XY\in \Gamma(TL)$ and $\tilde{\nabla}_X Y|_{q}\in \dd_q=T_qL,$ for $q\in L$. 
It follows that  $\sigma(X,Y)=0$ and, hence, $L$ is totally geodesic. Finally, by Lemma \ref{firstlemma}, the leaves are isometric to some $\mathbb{C}P^{n-1}_t$, where $t=p$ if $N$ is space-like or $t=p-1$ if $N$ is time-like. 
\end{proof}

\subsection{The shape operator of ruled real hypersurfaces}\label{opereitor}

From now on, $M$ will denote a ruled real hypersurface in $\cpn$, unless otherwise stated. 
Suppose there exists an open subset $\Omega$ in $M$ on which the shape operator of $M$ vanishes: $A|_{\Omega}\equiv 0$. It implies that $\Omega$ is totally geodesic, but this leads to a contradiction as follows. For $X,Y\in TM$, the Codazzi equation \eqref{cdz} implies that $0=\eta(Y)\phi X-\eta(X) \phi Y-2 g(X,\phi Y)\xi$ and we obtain immediately that $g(X,\phi Y)=0$, for all $X,Y\in TM$. We see that for a unit vector $X\in \mathcal{D}$ and $Y=\phi X$, we obtain a contradiction. 
Then, on a (dense) open subset $\Omega$, there exists a non-zero $U\in \dd$, and a function $\mu\in C^{\infty}(\Omega)$ such that
\begin{equation} A\xi=\ep \mu \xi+ U. 
\end{equation}
We discuss the shape operator depending on the causal character of the vector field $U$. \\

$\bullet$ Suppose first that $U$ is time-like or space-like. There exists a unit $W\in \mathcal{D}$ and a function $\lambda\in C^{\infty}(\Omega)$ such that $U=\ep_W\lambda W$, $\ep_W=g(W,W)=\pm 1$. Then, $A\xi =\ep \mu \xi +\ep_W\lambda W$. Since $g(AW,X)=0$ for any $X\in\dd$, $AW=\ep g(AW,\xi)\xi=\ep g(W,A\xi)\xi = 
\ep g(W,\ep \mu \xi +\ep_W\lambda W)\xi= \ep\lambda \xi$. We may write 
\begin{equation}\label{shapeOpts}
A\equiv\begin{pmatrix} \varepsilon \mu & \varepsilon \lambda  & 0 & \ldots & 0 \\
\ep_W\lambda &  &  & &  \\
0 &  &\bigzero & &  \\
\vdots & & &&\\ 
0 &  & & &  \\
\end{pmatrix}, \quad A\xi=\ep \mu \xi + \ep_W\lambda W, \ AW = \ep\lambda \xi. 
\end{equation}
Remark that $g(\xi,A\xi)=\mu$ and $g(AW,\xi)=\lambda$. Suppose first that $\mathrm{rank}(A)=1$ on an open set $\tilde{\Omega}\subset M$.
It follows that $\lambda=0$ on $\tilde{\Omega}$ and therefore $A\xi=\ep\mu\xi$, $AX=0$, for all $X\perp \xi.$ 
However, when $AX$ is of the form $AX=a X+b\eta(x)\xi$, $a,b\in C^{\infty}(M)$, we know that $M$ is in the list of  Theorem 4.3, \cite{KO}, which requires that $a$ is always non-zero. This does not hold when $\mathrm{rank}(A)=1$.
We conclude that there exists a dense open subset $\Omega=\{q\in M : \lambda(q)\neq 0\}$ in $M$ such that $\mathrm{rank}(A|_{\Omega})=2$ and $A$ is given as in \eqref{shapeOpts}. We will work on this $\Omega$. 

Next, given $q\in \Omega$, consider an integral curve of $\xi$, namely, $\alpha:(-\delta,\delta) \to \Omega$, with $\alpha(0)=q$, $\alpha'(s)=\xi_{\alpha(s)}$, $\forall s\in (-\delta,\delta)$. Note that $\tilde \nabla _{\alpha'(s)}\alpha'(s)\neq 0$ because along the curve $\alpha$, 
\begin{equation*}
\tilde \nabla _{\alpha'(s)}\alpha'(s) = (\tilde \nabla_{\xi} \xi)_{\alpha(s)} =(\nabla_{\xi}\xi+\varepsilon g(A\xi,\xi)N)_{\alpha(s)}=(\phi A\xi+\ep \mu N)_{\alpha(s)}= (\ep_W\lambda \phi W+\ep \mu N)_{\alpha(s)}. 
\end{equation*}
The previous vector does not vanish since $\mathrm{rank}(A)\neq 0$ assures that $\lambda (\alpha(s))\neq 0$.

$\bullet$ Suppose now that $U$ is light-like and $A\xi=\ep \mu \xi+ U$. There exist orthonormal vectors $E_1$ and $E_2$ in $\dd$, and a function $a\in C^{\infty}(\Omega)$ such that  $g(E_1,E_1)=1$, $g(E_2,E_2)=-1$, $g(E_1,E_2)=0$, and $U=aE_1-aE_2$. Since $g(\phi E_1,\phi E_1)=g(JE_1,JE_1)=+1$, then $n\geq 3$. As before, $AE_i=\ep \rho_i \xi$, for some function $\rho_i$, $i=1,2$. But then, $\rho_i=g(AE_i,\xi)=g(E_i,A\xi)=g(E_i,\ep\mu\xi+aE_1 -a E_2)=a$.  Therefore we may find an orthonormal  basis $B=\{\xi,E_1,E_2,\ldots, E_{2n-2}\}$, with $g(E_i,E_i)=\pm 1$, for which the shape operator is of the form
 \begin{equation}\label{shapeOps}
A\equiv\begin{pmatrix}
\begin{matrix} \ep \mu \end{matrix} & \begin{matrix} \ep a& \ep a& 0 & \ldots & 0\end{matrix} \\  
\begin{matrix} a\\ -a\\  0\\ \vdots\\ 0 \end{matrix} & \bigzero 
\end{pmatrix}.
\end{equation}
 As a consequence, $AU=aAE_1-aAE_2=a(\varepsilon a\xi-\varepsilon a\xi)=0$ and 
$A\phi U=aA(\phi E_1 -\phi E_2)=\varepsilon a g(\phi E_1-\phi E_2,A\xi)\xi=\varepsilon a g(\phi E_1-\phi E_2,\varepsilon\mu\xi +aE_1-aE_2)\xi = \varepsilon a [ -a g(\phi E_1,E_2) -a g(\phi E_2,E_1)]=0.$

\subsection{The parametrization of ruled real hypersurfaces}\label{parametrization} 
Let $I$ be a real interval such that $0\in I$ and let $\alpha:I\to \cpn$ be a regular unit speed curve in $\cpn$, either space-like or time-like. Given $s\in I$, let $\Pi_s$ be the complex holomorphic non-degenerate hyperplane of maximal dimension in $\cpn$ tangent at $\alpha(s)$, such that $\Pi_s^{\perp}=\mathrm{span}\{\alpha '(s),J\alpha '(s)\}$. By Lemma \ref{firstlemma}, there exists a totally geodesic submanifold $L_s$  in $\cpn$  such that $T_{\alpha(s)}L_s=\Pi_s$.  We know that $L_s$ is isometric to $\mathbb{C}P^{n-1}_t$, with $t=p$, if $\Pi^{\perp}$ is positive definite or $t=p-1$, if $\Pi^{\perp}$ is negative definite. In this way, we obtain a family of totally geodesic submanifolds $\{L_s\}_{s\in I}$, with the property that $T_{\alpha(s)}L_s=(\mathbb C \alpha'(s))^{\perp}$. Intuitively, the set $\cup_{s\in I} L_s$ is a ruled real hypersurface. At first sight, there are two major families of ruled real hypersurfaces in $\cpn$, according to $\alpha'$ being time-like or space-like.
\begin{remark}\normalfont 
The examples of ruled real hypersurfaces constructed in the way described above satisfy that the maximal holomorphic distribution $\mathcal{D}$ is integrable, where  $\mathcal{D}=(\mathrm{span}\{\alpha', J\alpha'\})^{\perp}$.
\end{remark}
From now on, let $L_t^{n-1}$ be a connected open subset of $\mathbb{C}P_t^{n-1}$.
\begin{definition}\label{def1}
A smooth map $f:M:=I\times L_{t}^{n-1}\to \cpn$ is called {\em a ruled hypersurface parametrization} (shortly RHS-parametrization) induced by the curve $\alpha$ in the indefinite complex projective space $\cpn$, iff $f$ satisfies the following conditions:
	\begin{enumerate}
		\item The curve $\alpha $ is unit speed  $g(\alpha'(s), \alpha'(s))=\pm 1:= \varepsilon$. 
		\item There exists a point $x_0\in L_t^{n-1}$ such that $\alpha:I\to\cpn$, $\alpha(s)=f(s,x_0)$ for any $s\in I$; 
		\item For every $s\in I$, the map $f_s:L^{n-1}_t\to \cpn$, $q\mapsto f(s,q)$ is a totally geodesic and holomorphic immersion with $(f_{s})_*T_{x_0}L^{n-1}_t=(\mathrm{span}\{\alpha'(s), J\alpha'(s)\})^{\perp}$, where $t=p$ if $\varepsilon= 1$, or $t=p-1$ if $\varepsilon=-1$.
		\item For every $v\in T_{q_0}L^{n-1}_t$, the vector field $Z_v\in \mathfrak{X}_{\alpha}(\cpn)$, $s\mapsto {(Z_v)_s:=} {(df_s)}_{q_0}(v)$, along $\alpha$, is parallel in the bundle $(\mathrm{span}\{\alpha', J\alpha'\})^{\perp}$, i.~e. $Z_v(s)\in \mathrm{span}\{ \alpha'(s), J\alpha'(s) \}^{\perp}$ and $\tilde {\nabla}_{\alpha '(s)}Z_v\in \mathrm{span}\{\alpha'(s), J\alpha'(s)\}$, where $\tilde \nabla$ is Levi Civita connection in $T\cpn$. -- In other words, $f$ puts $L_t^{n-1}$ along $\alpha$ without rotations.
	\end{enumerate}
\end{definition}
\begin{proposition}\label{construction}
Given a unit speed curve $\alpha:I\to \cpn$, take $s_0\in I$. Then, there exists one, and only one, RHS-parametrization $f:I\times L^{n-1}_t\to \cpn$ such that: 
\begin{enumerate}
\item there exists $x_0\in \mathbb{C}P^{n-1}_t$ with $\alpha(s_0)=f_{s_0}(x_0)\in f_{s_0}(\mathbb{C}P^{n-1}_t)$, and 
$(f_{s_0})_*T_{x_0}\mathbb{C}P^{n-1}_t=\mathrm{span}\{\alpha'(s_0),J\alpha'(s_0)\}^{\perp}$;
\item $f(s,x)=\alpha(s)$ for any $s\in I$.
\end{enumerate}
\end{proposition}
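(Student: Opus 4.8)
The plan is to construct $f$ explicitly by moving the model hyperplane $L^{n-1}_t$ along $\alpha$ using parallel transport of a suitable frame, and then to verify the four axioms of Definition~\ref{def1} one by one, with uniqueness following from the rigidity of totally geodesic holomorphic submanifolds. First I would fix a lift $\hat\alpha:I\to\snp$ of $\alpha$ (choosing $\hat\alpha(s_0)$ over $\alpha(s_0)$ and extending by, say, a horizontal lift of $\alpha$), so that $\hat\alpha$ is a unit speed curve in $\snp$ with the same causal character $\varepsilon$. Along $\hat\alpha$, the plane $\mathrm{span}\{\hat\alpha(s),\hat\alpha'(s),J\hat\alpha(s),J\hat\alpha'(s)\}$ is a $4$-dimensional complex-$2$-dimensional non-degenerate subbundle of the trivial bundle $I\times\cn$; its $g$-orthogonal complement $\mathcal{H}_s$ is a complex $(n-1)$-dimensional non-degenerate subbundle, and it carries the flat connection induced by restricting $D$ (the Levi--Civita connection of $\cn$) and orthogonally projecting — equivalently, the connection for which condition~4 in Definition~\ref{def1} asks parallelism. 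Pick a $g_\CC$-unitary frame $\{W_1(s_0),\ldots,W_{n-1}(s_0)\}$ of $\mathcal{H}_{s_0}$ identified with an orthonormal complex basis of $T_{x_0}\CC P^{n-1}_t$ via the model totally geodesic embedding $\CC P^{n-1}_t\hookrightarrow \cpn$ from Lemma~\ref{firstlemma}, and parallel transport it along $\hat\alpha$ inside $\mathcal{H}$; this is possible because $\mathcal{H}$ is a non-degenerate bundle (the Frenet vectors are non-lightlike by hypothesis on $\alpha$ being spacelike or timelike, but here one only needs $\hat\alpha,\hat\alpha'$ non-lightlike, which holds), so parallel transport preserves the (flat) metric on $\mathcal{H}$ and keeps the frame unitary.

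Next I would define, for each $s\in I$, the totally geodesic holomorphic immersion $f_s:L^{n-1}_t\to\cpn$ as follows: use the frame $\{W_i(s)\}$ to build an element $M(s)\in SU(p,\bar p)$ whose first few columns are $\hat\alpha(s)$ and a completion of $\{\hat\alpha'(s),J\hat\alpha(s)\text{-related vectors}\}$ and whose remaining columns are the $W_i(s)$ (exactly the matrix construction in Step~2 of the proof of Lemma~\ref{firstlemma}), giving a holomorphic isometry $f^{(s)}:\cpn\to\cpn$; then set $f_s := f^{(s)}\circ (\text{model embedding of }\CC P^{n-1}_t)$ and $f(s,q):=f_s(q)$. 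Condition~1 holds by the choice of lift; condition~2 holds because the model embedding sends the base point $x_0=Pi(o,0)$ to $Pi(\hat\alpha(s))=\alpha(s)$ by construction of $M(s)$; condition~3 holds because $f_s$ is a composition of a holomorphic isometry with a totally geodesic holomorphic embedding, hence totally geodesic and holomorphic, and its differential at $x_0$ hits exactly $(\mathrm{span}\{\alpha'(s),J\alpha'(s)\})^\perp$ since $M(s)$ was built to send $Q_\pm\mapsto$ (the horizontal lift of) a unit vector spanning, together with $J$, the complement — this is where one records that the image is the correct holomorphic hyperplane. Condition~4 is exactly the statement that $s\mapsto (df_s)_{x_0}(v)$, which equals $dPi$ applied to $\sum c_i W_i(s)$ for the coordinate vector $v=\sum c_i W_i(s_0)$, is parallel in the bundle $(\mathrm{span}\{\alpha',J\alpha'\})^\perp$; and this holds precisely because we parallel-transported the $W_i$, after one checks that $dPi$ intertwines the projected connection on $\mathcal{H}$ upstairs with the bundle connection downstairs (a short computation using the Gauss formula~\eqref{GaussEq} and the fact that $Pi$ is a semi-Riemannian submersion with vertical space $\mathrm{span}\{J\hat\alpha\}$).

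Smoothness of $f$ is clear since $M(s)$ depends smoothly on $s$ (parallel transport is smooth in the parameter, and the Gram--Schmidt-type completion can be done smoothly once the frame is fixed, shrinking $I$ if needed, or globally by the explicit ODE). For uniqueness: suppose $\tilde f:I\times L^{n-1}_t\to\cpn$ is another RHS-parametrization satisfying (1)--(2). For each fixed $s$, both $f_s$ and $\tilde f_s$ are totally geodesic holomorphic immersions of a connected open subset of $\CC P^{n-1}_t$ into $\cpn$ passing through $\alpha(s)$ with the same tangent plane $(\mathrm{span}\{\alpha'(s),J\alpha'(s)\})^\perp$ there; by the uniqueness clause of Lemma~\ref{firstlemma} (item~2, together with the fact that a totally geodesic submanifold is determined near a point by its tangent space at that point) they have the same image $L_s$, and in fact $\tilde f_s = f_s\circ h_s$ for some isometry $h_s$ of $\CC P^{n-1}_t$ fixing $x_0$ with $(dh_s)_{x_0}=\mathrm{Id}$ after we use condition~4. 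Indeed, condition~4 for both parametrizations forces $(d\tilde f_s)_{x_0}=(df_s)_{x_0}$ on a frame of $T_{x_0}L^{n-1}_t$ for all $s$: both sides are the parallel transport in $(\mathrm{span}\{\alpha',J\alpha'\})^\perp$ of the common value at $s_0$ guaranteed by (1). An isometry of a connected totally geodesic $\CC P^{n-1}_t$ fixing a point with identity differential there is the identity, so $\tilde f_s=f_s$ for all $s$, i.e. $\tilde f=f$.

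The main obstacle I expect is bookkeeping rather than conceptual: making the dependence $s\mapsto M(s)$ smooth and $SU(p,\bar p)$-valued while respecting the causal signs (the columns $\hat\alpha,\hat\alpha',J\hat\alpha$ and the completion must be ordered so that $I_{p,\bar p}$ is preserved and $\det=1$, as in Lemma~\ref{firstlemma}), and carefully checking that $dPi$ really does intertwine the projected Levi--Civita connection of $\cn$ restricted to $\mathcal{H}$ with the bundle connection on $(\mathrm{span}\{\alpha',J\alpha'\})^\perp$ appearing in condition~4 — this last point uses that the $O'$Neill tensor terms land in the vertical/bad directions and drop out after projecting onto $\mathcal{H}$, so it is a genuine but routine verification.
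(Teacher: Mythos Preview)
Your approach is correct but takes a genuinely different route from the paper's. The paper works \emph{intrinsically} in $\cpn$: for each $v\in W:=\mathrm{span}\{\alpha'(s_0),J\alpha'(s_0)\}^{\perp}$ it solves the linear ODE
\[
\tilde\nabla_{\alpha'} Z_v = -\varepsilon_1\langle Z_v,\tilde\nabla_{\alpha'}\alpha'\rangle\,\alpha' - \varepsilon_1\langle Z_v, J\tilde\nabla_{\alpha'}\alpha'\rangle\, J\alpha',\qquad Z_v(s_0)=v,
\]
which is precisely the parallel-transport equation in the bundle $(\mathrm{span}\{\alpha',J\alpha'\})^{\perp}$, and then sets $f(s,x)=\exp_{\alpha(s)}\bigl(Z(s,\Psi(x))\bigr)$, where $\Psi=(d\Phi)_{x_0}\circ\exp_{x_0}^{-1}$ identifies $L^{n-1}_t$ with $W$ via the model embedding $\Phi$ of Lemma~\ref{firstlemma}. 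Your route instead lifts to $\snp\subset\cn$, parallel-transports a unitary frame of the linear hyperplane bundle $\mathcal H$, and packages the result as a smooth curve $M(s)\in SU(p,\bar p)$ of ambient isometries acting on the model $\CC P^{n-1}_t$. The paper's method is shorter and sidesteps exactly the O'Neill/submersion bookkeeping you correctly flag as the main obstacle; your method is more explicit (one actually produces the moving isometries as matrices) and aligns better with how the examples in Section~5 are built from one-parameter subgroups of $SU(p,\bar p)$. One caveat on your uniqueness step: condition~(1) in the statement fixes only the \emph{image} of $(df_{s_0})_{x_0}$, not the linear map itself, so your conclusion ``$(dh_s)_{x_0}=\mathrm{Id}$'' holds only after matching the initial differentials --- i.e.\ uniqueness is only up to an isometry of $\CC P^{n-1}_t$, exactly as the paper's Remark following the proposition records.
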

\begin{remark}\normalfont 
The curve $\alpha$ might not be a Frenet curve. We are just asking $\alpha'(s)\neq 0$ and not light-like for all $s$. The vector field $\tilde{\nabla}_{\alpha'}\alpha'$ along $\alpha$ does exist, and can be space-like, light-like, time-like, zero at some points, and even a mixture of cases.
\end{remark} 
\begin{proof}We are assuming that $\ep_1:=g(\alpha',\alpha')=\pm 1$ is a constant function. For simplicity, we call $W=\mathrm{span}\{\alpha'(s_0),J\alpha'(s_0)\}^{\perp}$. If $\ep_1=1$, we take $t=p$, and if $\ep_1=-1$, we take $t=p-1$. Next, given $v\in W$, we consider the following initial value problem: 
\begin{align}\label{diff}
\tilde \nabla _{\alpha'(s)} Z_v(s)&=-\varepsilon_1\langle Z_v(s), \tilde{\nabla}_{\alpha'(s)}\alpha'(s)  \rangle \alpha'(s) - \varepsilon_1\langle Z_v(s), J\tilde{\nabla}_{\alpha'(s)}\alpha'(s) \rangle J\alpha'(s),\\
Z_v(s_0)&=v. \nonumber 
\end{align}
Compute its solution $Z_v:I\to T\cpn$. By \eqref{diff},
$\tilde{\nabla}_{\alpha'(s)}Z_v(s) \in \mathrm{span}\{\alpha'(s), J\alpha'(s) \}$. 
The solution for $v=0$ is just $Z_0(s)=0$. Next, by taking inner product with $\alpha'$ and $J\alpha'$ in \eqref{diff}, it is  simple to check
\[ \langle \tilde{\nabla}_{\alpha'}Z_v,\alpha'\rangle = - \langle Z_v,\tilde{\nabla}_{\alpha'}\alpha'\rangle, \quad 
\langle \tilde{\nabla}_{\alpha'}Z_v,J\alpha'\rangle = - \langle Z_v,J\tilde{\nabla}_{\alpha'}\alpha'\rangle.
\]
With these, we obtain immediately that
\begin{align*}
\frac{d}{ds}\langle Z_v(s), \alpha'(s) \rangle= \langle \tilde \nabla_{\alpha'(s)} Z_v(s) , \alpha'(s)\rangle+ \langle Z_v(s), \tilde{\nabla}_{\alpha'(s)}\alpha'(s) \rangle=0. 
\end{align*}
And at $s=s_0$, $\langle Z_v(s_0),\alpha'(s_0)\rangle = \langle v,\alpha'(s_0)\rangle =0$. This shows that $Z_v\perp\alpha'$. Similarly, 
$Z_v(s)\in \mathrm{span}\{\alpha'(s),J\alpha'(s)\}^{\perp}$ for any $s\in I$. It is important here that we can extend  this construction to $Z:I\times W\to T\cpn$, $Z(s,v)=Z_v(s)$, obtaining a smooth map. 

By Lemma \ref{firstlemma}, there exists a (unique) totally geodesic, holomorphic embedding $\Phi:\mathbb{C}P^{n-1}_t\to\cpn$ such that for some $x_0\in\mathbb{C}P^{n-1}_t$, $\Phi(x_0)=\alpha(s_0)$ and $\Phi_*(T_{x_0}\mathbb{C}P^{n-1}_t)=W$. By restricting to a suitable open subset $L_t^{n-1}$ of $\mathbb{C}P^{n-1}_t$ with $x_0\in L_t^{n-1}$, we obtain an injective map $r:L_t^{n-1}\to T_{x_0}\mathbb{C}P^{n-1}_t$ such that $(\exp_{x_0})^{-1}=r$. We construct the injective map $\Psi:L^{n-1}_t\to W$, $\Psi:=(d\Phi)_{x_0}\circ r.$ By combining these maps, we define 
\[
f:I\times L^{n-1}_t\to \cpn,\quad 
f(s,x)=\exp_{\alpha(s)}(Z(s,\Psi(x))),\]
which clearly satisfies $f(s,x_0)=\exp_{\alpha(s)}(Z_0(s))=\alpha(s)$ for any $s\in I$, and for each $q\in L^{n-1}_t$, 
\begin{align*}  &f(s_0,q)=\exp_{\alpha(s_0)}(Z(s_0,\Psi(q)))=
\exp_{\alpha(s_0)}(\Psi(q)) \\ & =\exp_{\Phi(x_0)}\big((d\Phi)_{x_0}(r(q))\big)=\Phi(\exp_{x_0}(r(q)))=
\Phi(q). \end{align*}
For every $s\in I$,  the image of $Z(s,\cdot)$ is a holomorphic linear subspace of maximal dimension, thus 
$f(s,\cdot)$ is a totally geodesic submanifold, isometric to an  open subset of $\mathbb CP_t^{n-1}$.
\end{proof}
\begin{remark}\normalfont The uniqueness in these previous results are \textit{up to an isometry of $\mathbb{C}P^{n-1}_t$}. Indeed, it is well known that totally geodesic hyperplanes $\mathbb{C}P^{n-1}$ are invariant by some subgroups of isometries of $\mathbb{C}P^n$. Similarly, 
totally geodesic hyperplanes  $\mathbb{C}P^{n-1}_t$ are invariant by some subgroups of isometries of $\cpn$. 
\end{remark}

\begin{corollary} 
	Let $M$ be a ruled real hypersurface in $\cpn$. Then, for each point $q\in M$ there exists a unique parametrization of $M$ around $q$ as in Definition \ref{def1}.
\end{corollary}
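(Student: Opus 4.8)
The plan is to produce, for a given $q \in M$, an integral curve of the Reeb field through $q$ and then invoke Proposition \ref{construction} to build an RHS-parametrization around $q$, finally checking it actually parametrizes $M$ and is the unique one. First I would fix $q \in M$ and take the maximal integral curve $\alpha : (-\delta,\delta) \to M$ of $\xi$ with $\alpha(0) = q$, so $\alpha'(s) = \xi_{\alpha(s)}$ and $g(\alpha',\alpha') = \varepsilon = \pm 1$; in particular $\alpha$ is a unit-speed non-null curve, so Proposition \ref{construction} applies with $s_0 = 0$. Composing the immersion $M \hookrightarrow \cpn$ with $\alpha$ gives a curve in $\cpn$ to which the proposition associates a unique RHS-parametrization $f : I \times L^{n-1}_t \to \cpn$ with $f(s,x_0) = \alpha(s)$ and $(f_0)_* T_{x_0} L^{n-1}_t = \mathrm{span}\{\alpha'(0), J\alpha'(0)\}^{\perp} = \mathcal D_q$.

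Next I would argue that the image of $f$ coincides with $M$ near $q$. The key point is that for a ruled real hypersurface the leaf of $\mathcal D$ through $\alpha(s)$ is, by the Lemma characterizing ruled hypersurfaces together with Lemma \ref{firstlemma}, exactly a totally geodesic holomorphic $\mathbb{C}P^{n-1}_t$ tangent to $(\mathbb{C}\,\alpha'(s))^{\perp}$, i.e. the same totally geodesic submanifold $L_s$ that $f_s$ parametrizes. Hence $f(\{s\} \times L^{n-1}_t)$ is an open piece of the leaf of $\mathcal D$ through $\alpha(s)$, and $M$ near $q$ is the union of these leaves along $\alpha$; one checks that $df$ has full rank at $(0,x_0)$ — the $\partial/\partial s$ direction maps to $\alpha'(0) = \xi_q$, transverse to $\mathcal D_q$, while the $L^{n-1}_t$ directions span $\mathcal D_q$ by condition (3) — so after shrinking $I$ and $L^{n-1}_t$, $f$ is a diffeomorphism onto an open neighbourhood of $q$ in $M$. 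The ``without rotations'' condition (4) is automatic here because along $\alpha$ the leaves are transported by the ambient parallel transport: concretely, for $v \in T_{x_0}L^{n-1}_t$ one verifies that $(df_s)_{x_0}(v)$ satisfies the same ODE \eqref{diff} that defines $Z_v$, since $\tilde\nabla_{\alpha'} \big((df_s)(v)\big)$ has no $\mathcal D$-component — this follows from the shape-operator description in Section \ref{opereitor}, as $g(A\xi, \cdot)|_{\mathcal D}$ and $g(AX, \cdot)|_{\mathcal D}$ vanish for $X \in \mathcal D$ — so by uniqueness of solutions the two agree.

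For uniqueness, suppose $\tilde f : \tilde I \times \tilde L^{n-1}_t \to \cpn$ is another RHS-parametrization around $q$ in the sense of Definition \ref{def1}. Condition (2) forces its central curve $\tilde\alpha(s) = \tilde f(s, \tilde x_0)$ to be an integral curve of $\xi$: indeed $\tilde\alpha'(s)$ is orthogonal to $(\tilde f_s)_* T_{\tilde x_0}\tilde L^{n-1}_t = \mathcal D_{\tilde\alpha(s)}$ and, being tangent to $M$ of unit length, must be $\pm \xi_{\tilde\alpha(s)}$; up to orientation/reparametrization this is $\alpha$. Then conditions (3) and (4) pin down $\tilde f$ uniquely: the leaves $\tilde f_s(\tilde L^{n-1}_t)$ are the totally geodesic holomorphic hyperplanes through $\alpha(s)$ tangent to $(\mathbb C\,\alpha'(s))^{\perp}$, which by Lemma \ref{firstlemma} are unique up to an isometry of $\mathbb{C}P^{n-1}_t$, and the parallel-transport condition (4) fixes how they are attached along $\alpha$; this is precisely the uniqueness already asserted in Proposition \ref{construction} and the subsequent Remark. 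Therefore $\tilde f$ agrees with $f$ up to an isometry of $\mathbb{C}P^{n-1}_t$, which is the claimed uniqueness.

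The main obstacle I expect is the verification that the image of $f$ really is (an open subset of) the given $M$ rather than merely \emph{a} ruled hypersurface tangent to the same data — i.e. globalizing the leafwise identification $f(\{s\}\times L^{n-1}_t) = \text{leaf of } \mathcal D$ to an identification of neighbourhoods, together with the rank argument for $df$. This requires using the integrability of $\mathcal D$ (so that $M$ near $q$ genuinely decomposes into leaves) and the fact, from Lemma \ref{firstlemma}, that two totally geodesic holomorphic hyperplanes through $\alpha(s)$ with the same tangent space coincide; the parallel-transport matching in condition (4) then glues the identification smoothly in $s$. Everything else is bookkeeping with the Gauss--Weingarten formulae \eqref{GaussWeingarten}, \eqref{nablaxxi} and the shape-operator normal forms \eqref{shapeOpts}, \eqref{shapeOps}.
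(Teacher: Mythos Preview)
Your proposal is correct and follows essentially the same approach as the paper: take the integral curve of $\xi$ through $q$, apply Proposition \ref{construction}, and use the uniqueness in Lemma \ref{firstlemma} to conclude that the resulting RHS-parametrization actually parametrizes $M$ near $q$. The paper's own proof is a three-sentence sketch of exactly this argument; your version is more careful, explicitly addressing the rank of $df$, the verification of condition (4), and the uniqueness discussion, all of which the paper leaves implicit.
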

\begin{proof}
We just need to consider the integral curve $\alpha:(-\delta,\delta)\to \cpn$ such that $\alpha(0)=q$ and $\alpha'=\xi_{\alpha}$. We recall Proposition \ref{construction} to construct the parametrization. Given a point and a holomorphic hyperplane, by Lemma \ref{firstlemma}, the totally geodesic submanifolds $\mathbb{C}P^{n-1}_t$ are unique, so the RHS-parametrization is a parametrization of $M$ around $q$. 
\end{proof}

\section{Minimal Ruled Real Hypersurfaces}\label{minimal}

We introduce the following definition, inspired by \cite{A-O'M}.
\begin{definition}%[Totally real circle] 
\label{totrealcircle}
A \emph{totally real circle} is a Frenet curve of order $2$, with constant curvature and such that the two Frenet vectors $F_1, F_2$ (see Definition \ref{lemmaFrenet}) span a totally real plane at each point of the curve. \end{definition}
\begin{definition} A real hypersurface $M$ in $\cpn$ will be called \emph{minimal} if its mean curvature (function) vanishes everywhere. 
\end{definition}
\begin{theorem}\label{TH1}
	A minimal ruled real hypersurface $M$ in $\cpn$ is generated by a unit curve $\alpha:\de\to M$ which satisfies one of the following conditions:
	\begin{itemize}
		\item[a)]  $\alpha$ is a space-like or  time-like geodesic;
		\item[b)]  $\alpha$ is a totally real circle of a totally real, totally geodesic $\mathbb{R}P^2$, $\mathbb{H}^2_{2}$ or $\mathbf{S}^2_1$ in $\cpn$; 
		\item[c)] $\alpha$ is a totally real curve, but not a Frenet curve, determined by the system 
		\begin{equation}\label{curve-c}
		F_1:=\alpha', \ F_2:=\tilde{\nabla}_{F_1}F_1,\ \tilde{\nabla}_{F_1}F_2=0,\  
		F_2\ {light-like}, \ g(F_1,F_2)=0, \  g(F_1,J F_2)=0,
		\end{equation} 
		contained in a totally real, totally geodesic submanifold of $\cpn$, namely $\mathbf{B}_1^3$ if $\alpha$ is space-like, or $\mathbf{B}^3_2$ if $\alpha$ is time-like. 
	\end{itemize}
\end{theorem}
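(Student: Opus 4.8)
The plan is to start from the parametrization result: by the Corollary at the end of Section 3, a ruled real hypersurface $M$ is, around each point $q$, the image of the RHS-parametrization centered on the integral curve $\alpha$ of $\xi$ through $q$, so $\alpha'=\xi$. Hence everything reduces to computing the mean curvature of $M$ in terms of the geometry of $\alpha$, using the explicit shape operators \eqref{shapeOpts} and \eqref{shapeOps}, and then imposing minimality. First I would note that in both the ``$U$ spacelike/timelike'' and the ``$U$ lightlike'' cases the shape operator has trace $\mathrm{tr}(A)=g(A\xi,\xi)\cdot(\text{something})$; in fact from \eqref{shapeOpts}, $\mathrm{tr}(A)=\varepsilon\mu\cdot\varepsilon=\mu$ up to the sign conventions on $\varepsilon$, and from \eqref{shapeOps} likewise $\mathrm{tr}(A)=\mu$ (the $2\times2$ block contributes $\varepsilon\mu$ on the diagonal and zeros elsewhere). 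So $M$ is minimal if and only if $\mu\equiv 0$ on the dense open set $\Omega$, i.e. $A\xi=U\in\mathcal D$. I would then translate $A\xi$ into the ambient covariant derivative of $\alpha'=\xi$: along $\alpha$, $\tilde\nabla_{\alpha'}\alpha'=\tilde\nabla_\xi\xi=\phi A\xi+\varepsilon g(A\xi,\xi)N=\phi U+\varepsilon\mu N$, so minimality $\mu=0$ is equivalent to $\tilde\nabla_{\alpha'}\alpha'=\phi U$ being everywhere tangent to $L_s$, in particular tangent to $\mathcal D$ and orthogonal to $\xi$ and $N$. Equivalently, writing $F_2:=\tilde\nabla_{F_1}F_1$ with $F_1=\alpha'$, minimality says $F_2\perp\mathrm{span}\{\alpha',J\alpha'\}$, i.e. $g(F_2,F_1)=0$ (automatic since $g(F_1,F_1)=\pm1$ is constant) and $g(F_2,J F_1)=0$, i.e. $g(U,U')=$ ``no $J\alpha'$-component'', which is exactly the condition making $\alpha$ a \emph{totally real} curve in the weak sense that $F_1$ and $F_2$ span a totally real plane (or $F_2=0$).

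Next I would split according to the causal character and the rank of $F_2=\tilde\nabla_{\alpha'}\alpha'$. \textbf{Case (a):} if $F_2\equiv0$ on an open set, then $\alpha$ is a geodesic of $\cpn$; since $\alpha'=\xi$ is non-null (spacelike if $N$ timelike, timelike if $N$ spacelike), we are in case a). \textbf{Case (b):} suppose $F_2$ is nowhere zero and \emph{non-null}. Then $\{F_1,F_2/|F_2|\}$ are the first two Frenet vectors and $\alpha$ is, at least locally, a Frenet curve of order $\geq2$; the totally real condition $g(F_2,JF_1)=0$ just derived, together with $g(F_1,JF_1)=0$, shows $\mathrm{span}\{F_1,F_2\}$ is a totally real plane. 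I would then use the Codazzi equation \eqref{cdz} and the explicit form \eqref{shapeOpts} to show that the curvature $\kappa_1=|F_2|$ is constant and that the higher Frenet curvatures vanish, i.e. $\alpha$ is a totally real \emph{circle}; this is the technical heart, essentially differentiating the relations $AW=\varepsilon\lambda\xi$, $A\xi=\varepsilon_W\lambda W$ along $\xi$ and along $W$ and feeding in \eqref{nablaxxi} and \eqref{cdz} to pin down how $\lambda$ and the frame evolve. Once $\alpha$ is a totally real circle, Lemma \ref{totgeosurf} provides a totally real, totally geodesic surface $\Sigma$ with $T_{\alpha(0)}\Sigma\supset\mathrm{span}\{F_1,F_2\}$; since $\Sigma$ is totally geodesic the circle stays inside it, and by the classification in Lemma \ref{totgeosurf} the three causal types of $\Sigma$ are exactly $\mathbb{R}P^2$, $\mathbb{H}^2_2$, $\mathbf{S}^2_1$, which is case b). \textbf{Case (c):} the remaining possibility is that $F_2=\tilde\nabla_{\alpha'}\alpha'$ is lightlike (on a dense open set); this is precisely the ``$U$ lightlike'' branch of Section 3.1, where the shape operator is \eqref{shapeOps}. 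Here $\alpha$ is not a Frenet curve in the sense of Definition \ref{lemmaFrenet} (whose Frenet vectors are required non-null). I would show, again via Codazzi applied to \eqref{shapeOps}, that $\tilde\nabla_{F_1}F_2=0$, so $\alpha$ obeys the system \eqref{curve-c}, and that $F_1,F_2,JF_2$ span a fixed totally real $3$-plane along $\alpha$ with the right index ($1$ if $\alpha$ spacelike, $2$ if $\alpha$ timelike, dictated by $\varepsilon=g(\alpha',\alpha')$ and the computation $g(\phi E_1,\phi E_1)=+1$ forcing $n\geq3$). Applying Lemma \ref{3submanifolds} to that $3$-plane yields the totally real, totally geodesic submanifold $\mathbf{B}^3_1$ or $\mathbf{B}^3_2$ containing $\alpha$, which is case c).

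To organize the argument I would establish once and for all the minimality reduction ($\mathrm{tr}A=\mu$, hence $M$ minimal $\iff\mu\equiv0\iff A\xi=U\in\mathcal D$) as a preliminary lemma, and then the three cases are distinguished purely by the causal character of $U$ (equivalently of $\tilde\nabla_{\alpha'}\alpha'=\phi U$): non-existent/zero gives (a), non-null gives (b), null gives (c). For (b) and (c) I would additionally need to verify that the curve really lies in the claimed totally geodesic submanifold, which follows from the axiom-of-planes Lemmata \ref{totgeosurf} and \ref{3submanifolds} together with the fact that a curve whose velocity and acceleration (and, in case (c), $J$ of the acceleration) stay inside a totally geodesic submanifold tangent to them initially must remain in it, by uniqueness of geodesics / ODE uniqueness in the ambient space.

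I expect the main obstacle to be the rigidity step in case (b): showing that the Frenet curvature of $\alpha$ is constant and that all higher curvatures vanish, using only the Codazzi equation and the rank-$2$ form \eqref{shapeOpts} of $A$. This requires carefully differentiating $A$ along both $\xi$ and $W$, controlling the covariant derivatives of the frame $\{\xi,W,\phi W,\dots\}$ through \eqref{nablaxxi} and the structure equations, and extracting a closed first-order system for $(\mu,\lambda)$ and the frame; the computation is delicate because $A$ is not diagonalizable and the ambient metric is indefinite, so sign bookkeeping with the $\varepsilon$'s is easy to get wrong. The analogous step in case (c) (deriving $\tilde\nabla_{F_1}F_2=0$ from \eqref{shapeOps} and Codazzi) should be similar but shorter, since there the structure is even more degenerate and $AU=A\phi U=0$ as already observed in Section 3.1.
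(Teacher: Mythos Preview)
Your plan is essentially the paper's own proof: reduce minimality to $\mu=0$, compute $\tilde\nabla_{\alpha'}\alpha'=\phi U$, use Codazzi to show $\nabla_\xi U=0$ (hence $\tilde\nabla_{F_1}F_2=-\varepsilon g(U,U)F_1$ in the non-null case and $\tilde\nabla_{F_1}F_2=0$ in the null case), and then invoke Lemmata~\ref{totgeosurf} and~\ref{3submanifolds}.

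Two slips to fix. First, $\xi=-JN$ has the \emph{same} causal character as $N$, not the opposite. Second, in case (c) your proposed $3$-plane $\mathrm{span}\{F_1,F_2,JF_2\}$ is degenerate: since $F_2$ is null and $g(F_2,JF_2)=0$, its Gram matrix has rank $1$. The paper instead decomposes $F_2=E_1+E_2$ with $E_1$ timelike, $E_2$ spacelike (both in $\mathcal D$, with $g(E_1,JE_2)=0$) and takes the non-degenerate totally real $3$-plane $\mathrm{span}\{F_1,E_1,E_2\}$; this is what feeds into Lemma~\ref{3submanifolds} and fixes the index as $1$ or $2$ according to the sign of $g(F_1,F_1)$.
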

\begin{proof} As in Section \ref{opereitor}, $A\xi=\ep \mu\xi+U$, for some $U\in \dd$. Since $g(AX,Y)=0$ for any $X,Y\in \dd$, it follows that for a minimal ruled real hypersurface we have
\begin{align}
A\xi=U,\quad AX=\ep g(AX,\xi)\xi=\ep g(X,U)\xi, \quad \phi AX=0,
\end{align} 
for any $X\in \dd$. We also need 
\begin{align*}
\tilde{\nabla}_\xi\xi=\nabla_\xi\xi+\varepsilon g(A\xi,\xi)N=\phi A\xi=\phi U. 
\end{align*}
We will discuss two cases, according to the causal character of  the vector field $U$.\\
\textbf{Case 1}: Assume $U$ is space-like or time-like. We have
\begin{align*}
\tilde{\nabla}_{\xi}\tilde{\nabla}_{\xi}\xi=& \tilde{\nabla}_\xi\phi U=\nabla_{\xi}\phi U+\ep g(\xi, A\phi U)N
=\nabla_{\xi}\phi U=(\nabla_{\xi}\phi)U+\phi\nabla_{\xi}U\\
\overset{\eqref{structureEq}}{=}&\ep g(U,\xi)A\xi-\ep g(A\xi,U)\xi+\phi\nabla_{\xi}U
=-\ep g(U,U)\xi +\phi\nabla_{\xi}U.
\end{align*}
We may show that $\nabla_{\xi}U=0$. Its component in the direction of $\xi$ is given by $$g(\nabla_{\xi}U,\xi)=-g(U, \nabla_{\xi}\xi)=-g(U,\phi A\xi)=-g(U, \phi U)=0,$$
while the one in the direction of $U$ may be determined in the following way.  We use the Codazzi equation \eqref{cdz}  to evaluate
\begin{equation}
(\nabla_{\xi}A)U-(\nabla_UA)\xi=\ep\phi U,
\end{equation}
while using the definition for the covariant derivative of $A$ gives
\begin{align*}
(\nabla_{\xi}A)U-(\nabla_UA)\xi=&\nabla_{\xi}AU-A\nabla_{\xi}U-\nabla_UA\xi+A\nabla_U\xi\\
=&\nabla_{\xi}(\ep g(U,U)\xi)-\ep g(\nabla_{\xi}U,U)\xi-\nabla_UU+A\phi AU\\
=&2\ep g(\nabla_{\xi} U,U)\xi+\ep g(U,U)\phi A\xi-\ep g( \nabla_\xi U,U)\xi-\nabla_UU.
\end{align*}
We multiply by $\xi$ in the above two equations and obtain
\begin{align*}
0&=\ep g(\phi U, \xi) =\ep g(\nabla_{\xi}U,U) g(\xi,\xi)-g(\nabla_UU,\xi) \\ 
&=g(\nabla_{\xi}U,U) +g(U,\phi AU)=g(\nabla_{\xi}U,U). 
\end{align*}
Let us show now that $g(\nabla_{\xi}U, X)=0$, for any $X\perp U,\xi$ which is tangent to the hypersurface.
As before, we evaluate in two ways 
\begin{align*}
(\nabla_XA)\xi-(\nabla_{\xi}A)X\overset{\eqref{cdz}}{=}&-\ep \phi X\\
=&\nabla_XA\xi-A\nabla_X\xi-\nabla_{\xi}AX+A\nabla_{\xi}X\\
=&\nabla_XU-A\phi AX+A\nabla_{\xi}X,\\
=&\nabla_XU+A\nabla_{\xi}X,
\end{align*}
where we have used that $AX=0$ for $X\perp U$. We take the component in the direction of $\xi$ in the above relation and obtain
\begin{align*}
0&=-\ep g(\phi X,\xi)=g(\nabla_XU,\xi)+g(A\nabla_{\xi}X,\xi)\\
&=-g(U,\nabla_X\xi)+g(\nabla_{\xi}X,U) = -g(U,\phi AX)+g(\nabla_{\xi}X,U)\\
=&-g(X,\nabla_{\xi}U).
\end{align*}
Eventually, it follows that $\nabla_{\xi}U=0$ and therefore 
\begin{equation}
\tilde{\nabla}_{\xi}\tilde{\nabla}_{\xi}\xi=-\ep g(U,U)\xi.
\end{equation}
Next, we consider $\alpha:\de \to M$ the integral curve of $\xi$ such that  $\xi_{\alpha(s)}=\alpha'(s)$. We denote by $U_s:=U|_{\alpha(s)}$ and we may write
\begin{equation}\label{3alpha}
\tilde{\nabla}_{\alpha'(s)}\tilde{\nabla}_{\alpha'(s)}\alpha'(s)=-\ep g(U_s,U_s)\alpha'.
\end{equation}
\textbf{Case 1a)} Suppose first that $\alpha$ is a geodesic. It follows that $\tilde{\nabla}_{\alpha'}\alpha'=0$ and therefore $g(U_s,U_s)=0$. Since $U_s$ is not light-like, it must hold that $U_s=0$. This implies
\begin{equation}
\nabla_{\xi}\xi|_{\alpha(s)}=\phi U_s=0.
\end{equation}
\textbf{Case 1b)} Suppose  that $\alpha$ is not a geodesic. Along the curve $\alpha$ we have 
\begin{equation}\label{1bphius}
\tilde{\nabla}_{\alpha'}\alpha'=  \phi U_s\neq 0
\end{equation}
and since $U$ is not light-like, we can normalize $\phi U_s$, denoting:
\begin{gather*}
F_2(s):=\frac{1}{\sqrt{| g( U_s, U_s) |}}\phi U_s,\\
\ep_2:=\mathrm{sign}(g( U_s, U_s)),\ 
\ep_2 \kappa_1^2=g(U_s, U_s),
\end{gather*} where $\kappa_1> 0$.
For $F_1:=\alpha'$ and $F_2$ defined as above, we have 
\begin{equation}
\tilde{\nabla}_{F_1}F_1=\ep_2 \kappa_1F_2. \label{Fr1}
\end{equation}
We define $F_3$ as
\begin{align}
F_3= \tilde{\nabla}_{F_1}F_2+\ep_1 \kappa_1 F_1,\label{Fr2}
\end{align}
where $g(F_1,F_1)=\ep_1$ and  $F_3$ could be a light-like vector (see \cite{PhDthesis}).
 In fact, from \eqref{3alpha} and \eqref{Fr1} we obtain that 
 \begin{equation}
 \ep_2 \kappa_1'F_2-\ep_2\ep \kappa_1^2F_1+\ep_2 \kappa_1 F_3=-\ep g(U_s,U_s) F_1.
 \end{equation}
We need to check that $\{F_1,F_2,F_3\}$ are orthogonal. 
If we multiply by $F_1$ in \eqref{Fr1}, 
\begin{align*}
\ep_2 \kappa_1 g(F_2,F_1)&=g(\tilde{\nabla}_{F_1}F_1,F_1) =
\frac{1}{2}\frac{d}{ds}g(F_1,F_1)=0, \\0&=g(F_2,F_1).
\end{align*}
If we multiply with $F_1$ in \eqref{Fr2}, we obtain
\begin{align*}
g(\tilde{\nabla}_{F_1}F_2,F_1)&=-\ep_1\kappa_1g(F_1,F_1)+g(F_3,F_1) \Leftrightarrow\\
-g(F_2,\tilde{\nabla}_{F_1}F_1)&=-\kappa_1+g(F_3,F_1) \Leftrightarrow\\
-g(F_2,\ep_2 \kappa_1 F_2)&=-\kappa_1+g(F_3,F_1) \Leftrightarrow\\
0&=g(F_3,F_1).
\end{align*}
Similarly, it follows that $g(F_3,F_2)=0$ when taking the inner product with $F_2$ in \eqref{Fr2}. From this fact, and  as $\ep_2\kappa_1^2=g(U_s,U_s)$, $\kappa_1\neq 0$, we obtain that $ \kappa_1'=0$ and $F_3=0$.   This shows that $\alpha$ is a curve of order $2$ and $\kappa_1$ is constant. In other words, $\alpha$ is a circle. 
Moreover, we want to compute its holomorphic torsion, defined in \cite{A-O'M}, $\tau=g(F_1, \phi F_2)$. It follows directly from \eqref{1bphius} and \eqref{Fr1}:
\begin{align*}
\tau&=g\left(\xi_{\alpha(s)}, \frac{1}{\ep_2 \kappa_1}\phi^2 U_s\right)
=\frac{1}{\ep_2 \kappa_1} g(\xi_{\alpha(s)}, -U_s+\ep \eta(U_s)\xi_{\alpha(s)} )=0.
\end{align*}
Therefore, we conclude that $\alpha$ is a totally real  circle. 

Moreover, we will prove that any totally real circle in $\cpn$ is actually contained in a totally geodesic and totally real surface $\Sigma\subset \cpn$, as follows. Let $\alpha:I\to \cpn$ denote an arc length, totally real circle in $\cpn$. Let $t_0\in I$,  $p= \alpha(t_0)$ and {$F_1(t_0),F_2(t_0)\in T_{\alpha(t_o)}\cpn$, which span a totally real plane}. Since $\alpha$ is a Frenet curve, then $F_1$ and $F_2$ are never light-like. By Lemma \ref{totgeosurf}, there exists a totally real and totally geodesic surface $\Sigma$ in $\cpn$ such that $p\in\Sigma$ and  $T_p\Sigma=\mathrm{span}\{ F_1(t_0), F_2(t_0) \} $. By Definition \ref{totrealcircle}, 
\begin{align}\label{alpha1}
\begin{split}
\tilde \nabla_{\alpha'(t) }\alpha'(t)=\ep_2 \kappa_1 E_2,\\
\tilde \nabla_{\alpha'(t) } E_2=-\ep_1\kappa_1 E_1,
\end{split}
\end{align}
where $\tilde \nabla $ is the Levi-Civita connection on $\cpn$ and $\alpha'(t)=E_1$. 
Consider now a curve $\beta: I\to \Sigma$ contained in the surface $\Sigma$ in $\cpn$, such that $p=\beta(t_0)$ and $\beta$ satisfies the following Frenet system of equations in $\Sigma$:
\begin{align*}
\nabla_{\beta'}\beta'=\ep_2 \kappa_1 E_2,\quad 
\nabla_{\beta'} E_2=-\ep_1\kappa_1 E_1,
\end{align*}
where $\nabla$ is the Levi-Civita connection on $\Sigma$. We impose, additionally, that $\alpha$ and $\beta$ satisfy the same initial conditions, $\alpha(t_0)=\beta(t_0)$, $\beta'(t_0)=F_1(t_0)=E_1(t_0)$ and $E_2(t_0)=F_2(t_0)$. Since $\Sigma$ is totally geodesic in $\cpn$, i.e. $\tilde{\nabla}_XY=\nabla_XY$ for any  vectors $X,Y$ tangent to $\Sigma\subset\cpn$,  we have that the Frenet equations of $\beta$ are the same both in $\Sigma$ and in $\cpn$. Therefore, $\beta$ also satisfies \eqref{alpha1}. By recalling the uniqueness of solution to initial value problems, then $\alpha=\beta$.  That is, $\alpha$ is contained in the totally geodesic, totally real surface $\Sigma$. 
The type of surface will depend on the causal character of the Frenet system $\{F_1=\alpha',F_2\}$ of $\alpha$. To explain this, we resort to Lemma \ref{totgeosurf}. Take $\Pi=\mathrm{span}\{F_1,F_2\}$ at a given point. 
\begin{enumerate}
\item If $\Pi$ is space-like, then the surface is (an open subset of) $\mathbb{R}P^2$. 
\item If $\Pi$ is an indefinite plane, the surface is locally isometric to $\mathbf{S}^2_1$.
\item If $\Pi$ is negative definite, the surface is $\mathbb{H}^2_2$. 
\end{enumerate}

\noindent\textbf{Case 2.} Assume $U$ is light-like. Again, we have 
\begin{equation}\label{phiu}
\tilde{\nabla}_{\xi}\xi=\nabla_{\xi}\xi+\ep g(A\xi,\xi)N=\phi A\xi=\phi U,
\end{equation} from which
\begin{align}
\tilde{\nabla}_{\xi}\tilde{\nabla}_{\xi}\xi	=& \tilde{\nabla}_\xi\phi U 
		=\nabla_{\xi}\phi U+\ep g(U, \phi U)N 
		=\nabla_{\xi}\phi U 
=(\nabla_{\xi}\phi)U+\phi\nabla_{\xi}U\nonumber\\
\overset{\eqref{structureEq}}{=}& \ep g(U,\xi)A\xi-\ep g(A\xi,U)\xi+\phi\nabla_{\xi}U 
					=\phi \nabla_{\xi} U.\label{phinablaxiu}
\end{align}
Further on, we show that $\nabla_{\xi}U=0$. Indeed, firstly we have  
\begin{align*}
g(\nabla_{\xi}U,\xi)&=-g(U,\nabla_{\xi}\xi)=-g(U, \phi U)=0,\\
g(\nabla_{\xi}U,U)&=\frac{1}{2} \xi(g(U,U))=0. 
\end{align*}
and we may choose $X\in \dd$, such that $g(X,U)=\ep$. This gives immediately that $AX=\ep g(AX,\xi)\xi=\ep g(X,U)\xi=\xi$. Next, we compute in two different ways $(\nabla_XA)\xi-(\nabla_{\xi}A)X$:
\begin{align*}
(\nabla_XA)\xi-(\nabla_{\xi}A)X&=-\ep\phi X\\
			&=\nabla_XA\xi-A\nabla_X\xi-(\nabla_{\xi}AX-A\nabla_{\xi}X)\\
			&=\nabla_XU-A\phi AX-\nabla_{\xi}\xi+A\nabla_{\xi}X.
\end{align*}
We take the components in the direction of $\xi$ and obtain that 
$$0=g(\nabla_XU,\xi)+g(\nabla_{\xi}X,U)=-g(X,\nabla_{\xi}U).$$
We conclude therefore that $\nabla_{\xi}U=0$ and so, equation \eqref{phinablaxiu} implies that $\tilde{\nabla}_{\xi}\tilde{\nabla}_{\xi}\xi=0$. \\
As before, we consider the integral curve $\alpha$ of $\xi$, that is $\alpha:\de\to M$ and  $\alpha'(s) = \xi |_{ \alpha(s)}$, for the {arc parameter $s$}. We will use the index $``s"$ for vector fields along the curve $\alpha$.  By \eqref{phiu}, we obtain that $\alpha$ is determined by the following system of equations:
\begin{gather}\label{Cauchy}
F_1=\alpha'=\xi_{s}, \  
 \tilde{\nabla}_{F_1}F_1=\phi U_{s}=:F_2, \  \tilde{\nabla}_{F_1}F_2=0, \\
 g(F_1,F_2)=0, \   g(F_1,J F_2)=0, \nonumber
\end{gather}
where $F_2$ is a light-like vector. 

Let us show that  the curve $\alpha$ is  contained in a $3$-dimensional submanifold in $\cpn$. 
We have that $F_1=\alpha'=\xi_{s}$, $JF_1=N_{s}$ and $0\neq F_2\in \mathrm{span}\{F_1,JF_1\}^{\perp}=\dd$. There exists a local orthonormal basis of $\dd$ of the form $\{w_1,Jw_1,\ldots, w_{n-1}, Jw_{n-1}\}$ such that the first $t$ vectors $w_1,\ldots, w_t$ are time-like and the remaining vectors $w_{t+1},\ldots, w_{n-1}$ are space-like. There exist $a_i,b_i$ functions for $i=1,\ldots, n-1$, such that we may write $F_2=\sum_{k=1}^{n-1}(a_kw_k+b_kJw_k)$, If we denote by $v_k:=a_kw_k+b_kJw_k$, this is equivalent to $F_2=\sum_k v_k$. Moreover, the time-like vector $E_1=\sum_{k=1}^tv_k$ and the space-like vector $E_2=\sum_{k=t+1}^{n-1} v_k$ {satisfy $F_2=E_1+E_2$ and $g(E_1,JE_2)=0$.} Given  the totally real $Q:=\mathrm{span}\{F_1(0), E_1,E_2\}\subset \dd_{\alpha(0)}$, there exists a totally geodesic, totally real, $3$-dimensional submanifold $B$ of $\cpn$, for which $\alpha(0)\in B$, $F_2(0)\in Q$ and $T_{\alpha(0)}B=Q$. Given the uniqueness of solution for the Cauchy problem in \eqref{Cauchy}, we have that the curve $\alpha$ is entirely contained in $B$.

To describe $B$, we resort to Lemma \ref{3submanifolds}. To make it more simple and understandable, we work with the previous descriptions to Lemma \ref{3submanifolds}. 

\noindent\textbf{Case c-1)} $\mathbf{B=dS^3}$. Let $\bar{D}$ and $D$ the Levi-Civita connections of $\mathbb{L}^4$ and $dS^3$, respectively. Bearing in mind that the position vector $\chi:dS^3\to \mathbb{L}^4$ is a unit space-like normal vector field, the Gauss equation is
\[ \bar{D}_XY=D_XY-\langle X,Y\rangle \chi, 
\]
for any $X,Y\in \Gamma(TdS^3)$, where $\langle,\rangle$ is the standard flat metric in $\mathbb{L}^4$.

Let $\alpha:I\to dS^3$ be a unit, space-like curve in the de Sitter 3-space, with $F_1=\alpha'$. By combining the Gauss equation and  \eqref{curve-c}, we obtain the following ODE,
\[ F_2=D_{F_1}F_1 = \bar{D}_{F_1}F_1+\langle\alpha',\alpha'\rangle\alpha = 
\alpha''+\alpha, \quad 0=D_{F_1}F_2=\alpha'''+\alpha'.
\]
As a consequence, the vector $F_2$ along $\alpha$ must be constant, light-like, and orthogonal to $\alpha'$. Thus, everything reduces to the following initial value problem: 
\[ \alpha''+\alpha=F_2, \ \alpha(0)=p_0, \ \alpha'(0)=v_0, 
\quad p_0,v_0,F_2\in \mathbb{L}^4, \]
with  $\{p_0, v_0, F_2\}$ orthogonal, $\{p_0,v_0\}$ unit space-like, and $F_2$ light-like. The solution is
$\alpha:\RR\to dS^3$, 
\[ \alpha(s)=F_2+\cos(s)(p_0-F_2)+\sin(s)v_0.
\]
Indeed, $\langle \alpha(s),\alpha(s)\rangle=1$, 
$\langle\alpha'(s),\alpha'(s)\rangle =1$, and $\alpha''(s)+\alpha(s)=F_2$. 

Suppose that $\alpha:I\to dS^3$ could be unit, time-like. As before, at a certain point, we would reach to a set of vectors $p_0=\alpha(s_0)$ unit space-like, $v_0=\alpha'(s_0)$ unit time-like, $p_0\perp v_0$. However, if $v_0$ is time-like, since $\dim dS^3=3$ and its index is $1$, then $F_2\in v_0^{\perp}$ cannot be light-like. \\

\noindent\textbf{Case c-2)} $\mathbf{B=S_2^2}$. According to Lemma \ref{3submanifolds}, we consider the hyperquadric $\hat{B}_2=\{(x_1,\ldots,x_4)\in\RR^4_2 : -x_1^2-x_2^2+x_3^2+x_4^2=1\}$, which is locally anti-isometric to the anti-de Sitter 3-space. 
Let $\bar{D}$ and $D$ the Levi-Civita connections of $\mathbb{R}^4_2$ and $\hat{B}_2$, respectively. Bearing in mind that the position vector $\chi:\hat{B}_2\to \mathbb{R}^4_2$ is a unit space-like normal vector field, the Gauss equation is
\[ \bar{D}_XY=D_XY-\langle X,Y\rangle \chi, 
\]
for any $X,Y\in \Gamma(T\hat{B}_2)$, where $\langle,\rangle$ is the standard flat metric in $\mathbb{R}^4_2$. 

Let $\alpha:I\to \hat{B}_2$ be a unit, time-like curve in $\hat{B}_2$, with $F_1=\alpha'$. By combining the Gauss equation and  \eqref{curve-c}, we obtain the following ODE,
\[ F_2=D_{F_1}F_1 = \bar{D}_{F_1}F_1+\langle\alpha',\alpha'\rangle\alpha = 
\alpha''-\alpha, \quad 0=D_{F_1}F_2=\alpha'''-\alpha'.
\]
As a consequence, the vector $F_2$ along $\alpha$ must be constant, light-like, and orthogonal to $\alpha'$. Thus, everything reduces to the following initial value problem: 
\[ \alpha''-\alpha=F_2, \ \alpha(0)=p_0, \ \alpha'(0)=v_0, 
\quad p_0,v_0,F_2\in \mathbb{L}^4, \]
with orthogonal $\{p_0, v_0, F_2\}$, $p_0$ unit space-like, $v_0$ unit time-like, and $F_2$ light-like. The solution is 
\[\alpha:\RR\to \hat{B}_2, \quad \alpha(s)=\cosh(s)(p_0+F_2)+\sinh(s)v_0-F_2.
\]

Suppose that $\alpha:I\to \hat{B}_2$ could be unit, space-like. As before, at a certain point, we would reach to a set of vectors $p_0=\alpha(s_0)$ unit space-like, $v_0=\alpha'(s_0)$ unit space-like. However, since $\dim \hat{B}_2$ and the index is 2, if $v_0$ is space-like, then $F_2\in v_0^{\perp}$ cannot be light-like. 
\end{proof} 	

\section{Examples}\label{examples}
We develop the first example in detail, and simplify the rest due to the similarities.
\begin{example}\normalfont \label{exam-first}
We take $1\leq p\leq n-2$,  $n\geq 3$. Consider $\Omega=\{z=(z_1,\ldots,z_n)\in \mathbb{S}_{2p}^{2n-1} : z_n\neq 0\}$. We define the map 
\[\hat\psi:\hat{M}=\mathbb{R}\times \Omega\to \mathbb{S}^{2n+1}_{2p}, \quad \hat \psi(t,z)=(z_1,\ldots,z_{n-1},\cos(t)z_n,\sin(t)z_n).\] 
Since $p\leq n-2$, then $n+1-p>2$, so that 
%Since $p\leq n-1$, the matrix 
\[\mathbf{A}_{t}=\begin{pmatrix} I_{n-1} & 0 & 0 \\ 0 & \cos(t) & -\sin(t) \\ 0 & \sin(t) & \cos(t)
\end{pmatrix} \in SU(p,n+1-p),
\]
and so, it represents an isometry of $\sn$, which also induces an isometry of $\cpn$. Define for each $t\in\mathbb{R}$, $\Omega^t:= \hp\left(\{t\}\times\Omega\right)$. Thus, $\mathbf{A}_t(\hp(\{0\}\times \Omega))=\mathbf{A}_t(\Omega^0)$. Each $\Omega^t$, $t\in\RR$, is totally geodesic, and so $\pi(\Omega^t)\subset\cpn$ is an open subset of a totally geodesic complex hyperplane. With them, we construct the following ruled real hypersurface in $\cpn$, 
	\[\psi: M=\RR\times \pi(\Omega) \to \cpn, \quad \psi(t,[z])=\pi(\hp(t,z)), \quad 
	\psi(M)=\bigcup_{t\in\mathbb{R} }\pi(\Omega^t).
	\] 
The differential of $\hat\psi$ is
\begin{align*} \hat\psi_t=\partial_t\hat\psi & =(0,\ldots,0,-\sin(t)z_n,\cos(t)z_n), \\ 
d\hat\psi(0,X) & = (X_1,\ldots,X_{n-1},\cos(t)X_n,\sin(t)X_n), \quad X\in T_z\Omega. 
\end{align*}
If we recall $iz=J\chi(z)$, the vertical part of $\pi:\sn\to \cpn$ is $\mathcal{V}_z=\mathrm{Span}\{iz\}=\mathrm{Span}\mathrm\{J\chi(z)\}$. Then, 
\[ J\chi\vert_{\hat\psi(t,z)} = i\hat\psi(t,z) = d\hat\psi_{(t,z)}(0,iz).\]
Given $(t,z)\in \mathbb{R}\times\Omega$, and $X\in T_z\Omega$, then $\langle z,X\rangle=0$. 
Simple computations lead to
\begin{align*}
 \langle i\hat\psi,d\hat\psi(0,X)\rangle = \langle iz,X\rangle, & \quad \langle \partial_t\hat\psi,d\hat\psi(0,X)\rangle = 0, \\
 \langle d\hat\psi(0,X),d\hat\psi(0,Y)\rangle = \langle X,Y \rangle, & \quad 
\langle \partial_t\hat\psi,\partial_t\hat\psi\rangle = \vert z_n\vert^2>0. 
\end{align*}
According to this, we define
\[ \hat{\xi}_{\hat\psi(t,z)}=\frac{\partial_t\hat\psi}{\vert z_n\vert}, \quad 
\hat{N}_{\hat\psi(t,z)} = i\hat{\xi}_{\hat\psi(t,z)}=\frac{1}{\vert z_n\vert}\left(0,\ldots,0,-i \sin(t)z_n, i \cos(t)z_n\right).
\]
Let us see that $\hat{N}$ is a horizontal, space-like, unit normal vector field along $\hat\psi$:
\begin{align*}
& \langle \hat{N},\partial_t\hat\psi\rangle = \frac{1}{\vert z_n\vert} \langle i \partial_t\hat\psi,\partial_t\hat\psi\rangle =0, 
\langle \hat{N},\hat{N}\rangle = \frac{1}{\vert z_n\vert^2} \langle i\partial_t\hat\psi,i\partial_t\hat\psi\rangle =1, \\
& \langle \hat{N},d\hat\psi(0,X)\rangle = \frac{1}{\vert z_n\vert} \mathrm{Re}\left(
-\sin(t)i z_n \cos(t) \overline{X_n}+\cos(t)i z_n\sin(t)\overline{X_n}\right)=0, \\
& \langle \hat{N},i\hat\psi\rangle = \langle i\hat{\xi},i\hat\psi\rangle =\langle \hat{\xi},\hat\psi\rangle = 
\frac{1}{\vert z_n\vert}\langle \partial_t\hat\psi,\hat\psi\rangle =0. \end{align*}
We point out that an integral curve of $\hat{\xi}$ is
\begin{equation} \label{integralxi} \alpha(s)=\hat\psi\left(t+s/\vert z_n\vert, z\right)
= \left( z_1,\ldots,z_{n-1},\cos\left(t+s/\vert z_n\vert\right)z_n,\sin\left(t+s/\vert z_n\vert\right)z_n\right).
\end{equation}
Indeed, $\alpha(0)=\hat\psi(t,z)$, $\alpha'(0)=
\left. \frac{d}{ds}\hat\psi\left(t+s/\vert z_n\vert,z\right) \right\vert_{s=0} =
\frac{1}{\vert z_n\vert} \partial_t\hat\psi (t,z)=\hat{\xi}_{\hat\psi(t,z)}$, and 
\[\alpha'(s)=\frac{1}{\vert z_n\vert}\left(0,\ldots,0,-\sin\left(t+s/\vert z_n\vert\right)z_n,\cos\left(t+s/\vert z_n\vert\right)z_n\right) = \hat{\xi}_{\hat\psi(t+s/\vert z_n\vert,z)}= \hat{\xi}_{\alpha(s)}.
\]
Let $\hat{A}$ be the shape operator associated with $\hat{N}$. By \eqref{GaussEq}, we obtain 
\begin{align*}
&\hat{A}\hat{\xi}=-\hat{\nabla}_{\hat{\xi}}\hat{N} = - D_{\hat{\xi}}\hat{N} = - \left.\frac{d}{ds} \hat{N}_{\alpha(s)}\right \vert_{s=0} = - \left.\frac{d}{ds} \hat{N}_{\hat\psi(t+s/\vert z_n\vert,z)}\right \vert_{s=0} \\
& = - \left.
\frac{d}{ds} \frac{i}{\vert z_n\vert} \left(0,\ldots,0,-\sin\left(t+s/\vert z_n\vert\right)z_n,
\cos\left(t+s/\vert z_n\vert\right)z_n\right) 
\right \vert_{s=0} \\
&=\frac{1}{\vert z_n\vert^2} \left(0,\ldots,0, i\cos(t)z_n,i \sin(t)z_n\right). 
\end{align*}
Finally, 
\begin{align*}\langle \hat{A}\hat{\xi},\hat{\xi}\rangle &= \frac{1}{\vert z_n\vert^3}
\langle (0,i \cos(t)z_n,i \sin(t)z_n), (0,-\sin(t)z_n,\cos(t)z_n)\rangle  \\
& = 
\frac{1}{\vert z_n\vert^3}\mathrm{Re}\left( -i \cos(t)\sin(t)\vert z_n\vert^2+i\sin(t)\cos(t)\vert z_n\vert^2\right)=0.
\end{align*}
Now, since a unit normal vector field to $M$ is $N=\pi_*(\hat{N})$, we also have $\xi=\pi_*(\hat{\xi})$, and the shape operator $A$ of $M$ is $A=\pi_*\hat A$, as expected. With all this information, we obtain $g(A\xi,\xi)=0$. As  $g(AX,Y)=0$ for any $X,Y\perp \xi$, $X,Y\in TM$, then $M$ is minimal. 

Let us study now the integral curves of $\hat{\xi}$. Note that $\langle \alpha',i\alpha\rangle=0$, which means that $\alpha$ is a horizontal curve. By \eqref{GaussEq}, 
\begin{align*}
 F(s):= & \hat{\nabla}_{\alpha'(s)}\alpha'(s) = D_{\alpha'(s)}\alpha'(s)+\langle \alpha'(s),\alpha'(s)\rangle \alpha(s) = \alpha''(s)+\alpha(s) \\
& = \frac{1}{\vert z_n\vert^2}(0,\ldots,0,-\cos(t+s/\vert z_n\vert)z_n,-\sin(t+\vert z_n\vert)z_n)+\alpha(s) \\
& = \left(z_1,\ldots,z_{n-1}, \left(1-\frac{1}{\vert z_n\vert^2}\right) \cos(t+s/\vert z_n\vert)z_n,
\left(1-\frac{1}{\vert z_n\vert^2}\right) \sin(t+s/\vert z_n\vert)z_n\right); \\
 \langle F(s),F(s)\rangle &= -\sum_{j=1}^p\vert z_j\vert^2+\sum_{j=p+1}^{n-1}\vert z_j\vert^2 
+\left(1-\frac{1}{\vert z_n\vert^2}\right)^2\vert z_n\vert^2 \\
&= 1+\left[\left(1-\frac{1}{\vert z_n\vert^2}\right)^2-1\right]\vert z_n\vert^2
 = \frac{1}{\vert z_n\vert^2}-1.
\end{align*}
When $\vert z_n\vert^2=1$,  $F(s)$ is zero or light-like. The first case holds when $z=(0,\ldots,0,e^{ir})$, and the second one, otherwise. When $\vert z_n\vert^2<1$, $F(s)$ is always space-like. If $\vert z_n\vert^2>1$,  $F(s)$ is always time-like. In addition, similar computations give $\langle F(s),\alpha(s)\rangle=\langle F(s),i\alpha(s)\rangle=0$, which show that $F(s)$ is a horizontal vector along $\alpha$. We need this to project it safely to $\cpn$. 

\noindent $\bullet$ Case $\vert z_n\vert=1$. Then, $F(s)=(z_1,\ldots,z_{n-1},0,0)$, which is constant and either zero or light-like. The curve $\pi(\alpha(s))$ is the case $a)$ or $c)$ of Theorem \ref{TH1}. 

\noindent $\bullet$ Case $\vert z_n\vert<1$. Then, $F(s)$ is space-like. To compute the Frenet system, we note that $1-1/\vert z_n\vert^2$ does not depend on $s$. We obtain 
\begin{align*}
&\kappa_1:=\sqrt{\frac{1}{\vert z_n\vert^2}-1}>0, \quad F_1(s)=\alpha'(s),\\
& F_2(s):=\frac{F(s)}{\kappa_1}
=\left(\frac{z_1}{\kappa_1},\ldots,\frac{z_{n-1}}{\kappa_1},-\kappa_1\cos(s/\vert z_n\vert)z_n,-\kappa_1(s)\sin(s/\vert z_n\vert)z_n\right), \\
& \hat{\nabla}_{\alpha'(s)}\alpha'(s)=\kappa_1 F_2(s). 
\end{align*}
Next, since $\langle F_2,\alpha'\rangle=0$, by \eqref{GaussEq},
\begin{align*}
\hat{\nabla}_{\alpha'(s)}F_2(s)=F_2'(s)-\langle \alpha'(s),F_2(s)\rangle\alpha(s)=-\kappa_1\alpha'(s).
\end{align*}
This is a Frenet curve of order 2, with $F_1,F_2$ space-like, with constant curvature $\kappa_1$ and 
\begin{align*}
&  \langle JF_1(s),F_2(s)\rangle = \frac{1}{\vert z_n\vert}\left\langle (0,\ldots,0,-i\sin(s/\vert z_n\vert)z_n,
i\cos(s/\vert z_n\vert)z_n),F_2(s)\right\rangle =0.
\end{align*}
Thus, if we project to $\cpn$, $\pi(\alpha)$ is a curve of type $b)$ in Theorem \ref{TH1}.

\noindent $\bullet$ Case $\vert z_n\vert>1$. Then, $F(s)$ is time-like. Similar computations give 
\begin{align*}
&\kappa_1:=\sqrt{1-\frac{1}{\vert z_n\vert^2}}>0, \quad F_1(s)=\alpha'(s),\\
& F_2(s):=\frac{-F(s)}{\kappa_1}
=\left(\frac{-z_1}{\kappa_1},\ldots,\frac{-z_{n-1}}{\kappa_1},-\kappa_1\cos(s/\vert z_n\vert)z_n,-\kappa_1\sin(s/\vert z_n\vert)z_n\right), \\
& \hat{\nabla}_{\alpha'(s)}\alpha'(s)=-\kappa_1 F_2(s). 
\end{align*}
We also obtain $\langle JF_1(s),F_2(s)\rangle=0$. Next, since $\langle F_2,\alpha'\rangle=0$, by \eqref{GaussEq},
\begin{align*}
\hat{\nabla}_{\alpha'(s)}F_2(s)=F_2'(s)-\langle \alpha'(s),F_2(s)\rangle\alpha(s)=-\kappa_1\alpha'(s).
\end{align*}
If we project to $\cpn$, $\pi(\alpha)$ is a curve of type $b)$ in Theorem \ref{TH1}. 

We define the function $f:(0,\pi/2]\to\mathbb{R}$, $f(r)=\sqrt{2}\sin(r)$. Clearly, for $0<r<\pi/4$, $0<f(r)<1$; $f(\pi/4)=1$; and for $\pi/4<r\leq \pi/2$, $f(r)>1$. With this, we consider the  curve $\gamma:(0,\pi/2]\to\Omega\subset  \mathbb{S}^{2n-1}_{2p}$, $\gamma(r)=(1,0,\ldots,0,\sqrt{2}\cos(r),\sqrt{2}\sin(r))$. In particular, the points $\gamma(\pi/8)$, $\gamma(\pi/4)$ and $\gamma(\pi/2)$ lie in the same connected component of $\Omega$, satisfying $\vert z_n\vert<1$, $\vert z_n\vert=1$ and $\vert z_n\vert=\sqrt{2}$, respectively. Thus, there exist different points lying in the same connected component of $\hat{M}$, such that the integral curves of $\hat{\xi}$ passing through them behave in a very different way. 

By projecting everything to $\cpn$, we have a  minimal ruled real hypersurface in $\cpn$ such that certain integral curves of $\xi$, starting at different points at the same connected component, behave as in any case of Theorem 2. 
\end{example}

\begin{example}\label{examp-third} 
\normalfont 
We take $1\leq p\leq n-2$, $n\geq 4$. Consider $\Omega=\{z=(z_1,\ldots,z_n)\in \mathbb{S}_{2p}^{2n-1} : z_1\neq 0\}$. We define the map 
\[\hp:\hat{M}:=\mathbb{R}\times \Omega\to \mathbb{S}^{2n+1}_{2p}, \quad 
\hp(t,z)=
(\cosh(t)z_1,z_2\ldots,z_{n},\sinh(t)z_1). \] 
Again, the matrix 
\[\mathbf{A}_{t}=\begin{pmatrix} \cosh(t) & 0 & \sinh(t) \\ 0 & I_{n-1} & 0 \\ \sinh(t) & 0 & \cosh(t)
\end{pmatrix} \in SU(p,n+1-p),
\]
and it represents an isometry of $\sn$, inducing an isometry of $\cpn$. Define for each $t\in\mathbb{R}$, $\Omega^t:= \hp\left(\{t\}\times\Omega\right)=\mathbf{A}_t(\hp(\{0\}\times \Omega))=\mathbf{A}_t(\Omega^0)$. Each $\Omega^t$, $t\in\RR$, is totally geodesic, and so $\pi(\Omega^t)\subset\cpn$ is a totally geodesic complex hyperplane. Then, we construct the following ruled real hypersurface in $\cpn$, 
	\[\psi: M=\RR\times \pi(\Omega) \to \cpn, \quad \psi(t,[z])=\pi(\hp(t,z)), \quad 
	\psi(M)=\bigcup_{t\in\mathbb{R} }\pi(\Omega^t).
	\] 
The differential of $\hp$ is
	\begin{align*} \partial_t\hp & =(\sinh(t)z_1,0,\ldots,0,\cosh(t)z_1), \\ 
		d\hp(0,X) & = (\cosh(t)X_1,X_2\ldots,X_{n},\sinh(t)X_1), \quad X\in T_z\Omega.
	\end{align*}
	Similarly to the previous example,
	\begin{align*}
		&\langle i\hp,d\hp(0,X)\rangle = \langle iz,X\rangle,  \quad \langle \partial_t\hp,d\hp(0,X)\rangle = 0, \\
		&  \langle d\hp(0,X),d\hp(0,Y)\rangle = \langle X,Y \rangle,  \quad 
		\langle \partial_t\hp,\partial_t\hp\rangle = \vert z_1\vert^2>0, \\
		& \hat{\xi}_{\hp(t,z)}=\frac{\partial_t\psi}{\vert z_1\vert}, \quad 
		\hat{N}_{\hp(t,z)} = i\hat{\xi}_{\hp(t,z)}=\frac{i}{\vert z_1\vert}\left(\sinh(t)z_1,0,\ldots,0,\cosh(t)z_1\right).
	\end{align*}
	Now, $\hat{N}$ is a horizontal, space-like, unit normal vector field along $\hp$. An integral curve of $\hat{\xi}$ is
	\begin{equation} \label{integralxi} \alpha(s)=\hp\left(t+s/\vert z_1\vert, z\right)
	= \left( \cosh\left(t+s/\vert z_1\vert\right)z_1,z_2,\ldots,z_{n},\sinh\left(t+s/\vert z_1\vert\right)z_1\right).
	\end{equation}
	Let $\hat{A}$ be the shape operator associated with $\hat{N}$. Then, by \eqref{GaussEq}, 
	\begin{align*}
		&\hat{A}\hat{\xi}=-\hat{\nabla}_{\hat{\xi}}\hat{N} = - D_{\hat{\xi}}\hat{N} = - \left.\frac{d}{ds} \hat{N}_{\alpha(s)}\right \vert_{s=0} = - \left.\frac{d}{ds} \hat{N}_{\hp(t+s/\vert z_1\vert,z)}\right \vert_{s=0} \\
		& = - \left.
		\frac{d}{ds} \frac{i}{\vert z_1\vert} \left(\sinh\left(t+s/\vert z_1\vert\right)z_1,0,\ldots,0,\cosh\left(t+s/\vert z_1\vert\right)z_1\right) 
		\right \vert_{s=0} \\
		&=\frac{-i}{\vert z_1\vert^2} \left(\cosh(t)z_1,0,\ldots,0, \sinh(t)z_1\right); \\
		& \langle \hat{A}\hat{\xi},\hat{\xi}\rangle=0. 
	\end{align*}
	Again, $\psi(M)$ is a minimal ruled real hypersurface in $\cpn$. 
	
Let us study now the integral curves of $\hat{\xi}$. Note that $\langle \alpha',i\alpha\rangle=0$, which means that $\alpha$ is a horizontal curve. By \eqref{GaussEq}, 
	\begin{align*}
		&F(s):= \hat{\nabla}_{\alpha'(s)}\alpha'(s) = D_{\alpha'(s)}\alpha'(s)+\langle \alpha'(s),\alpha'(s)\rangle \alpha(s) =
		\alpha''(s)+\alpha(s) \\
		& = \frac{1}{\vert z_1\vert^2}(\cosh\left(t+s/\vert z_1\vert\right),0,\ldots,0,\sinh(t+s/\vert z_1\vert)z_1)+\alpha(s) \\
		& = \left(\left(1+\frac{1}{\vert z_1\vert^2}\right)\cosh(t+s/\vert z_1\vert)z_1,z_2,\ldots,z_{n}, \left(1+\frac{1}{\vert z_1\vert^2}\right) \sinh(t+s/\vert z_1\vert)z_1\right); \\
		& \langle F(s),F(s)\rangle = -1.
	\end{align*}
By now, the Frenet system is 
	\begin{align*}
		&\kappa_1:=\sqrt{\frac{1}{\vert z_1\vert^2}+1}>0, \quad F_1(s)=\alpha'(s),\\
		& F_2(s):=\frac{-F(s)}{\kappa_1}
		= \left(-\kappa_1\cosh(t+s/\vert z_1\vert)z_1,\frac{-z_2}{\kappa_1},\ldots,\frac{-z_{n}}{\kappa_1},-\kappa_1 \sinh(t+s/\vert z_1\vert)z_1\right), \\
		& \varepsilon_1=\langle \alpha',\alpha'\rangle=+1,\ \varepsilon_2=\langle F_2,F_2\rangle=-1, \quad 
		\hat{\nabla}_{F_1(s)}F_1(s)=-\kappa_1 F_2(s)=\varepsilon_2\kappa_1F_2(s). 
	\end{align*}
	
	We also obtain $\langle JF_1(s),F_2(s)\rangle=0$. Next, since $\langle F_2,\alpha'\rangle=0$, by \eqref{GaussEq},
	\begin{align*}
		\hat{\nabla}_{F_1(s)}F_2(s)=F_2'(s)+\langle F_1(s),F_2(s)\rangle\alpha(s)=
		-\kappa_1F_1(s).
	\end{align*}
	If we project to $\cpn$, $\pi(\alpha)$ is a curve of type $b)$ in Theorem \ref{TH1}. 
\end{example}

\begin{example}\label{examp-fourth}\normalfont 
We take $2\leq p\leq n-1$, $n\geq 3$, and $\Omega=\{z=(z_1,\ldots,z_n)\in \mathbb{S}_{2(p-1)}^{2n-1} : z_n	\neq 0\}$. We define the map 
\[\hp:\hat{M}=\mathbb{R}\times \Omega\to \mathbb{S}^{2n+1}_{2p}, \quad 
\hp(t,z)=
(\sinh(t)z_n,z_1,\ldots,z_{n-1},\cosh(t)z_n). \]  
We use $\mathbf{A}_{t}$ as in Example \ref{examp-third}. Define for each $t\in\mathbb{R}$, $\Omega^t:= \hp\left(\{t\}\times\Omega\right)=\mathbf{A}_{t}(\hp(\{0\}\times \Omega))=\mathbf{A}_{t}(\Omega^0)$. Again, 
$\Omega^t$ is also totally geodesic, and so  $\pi(\Omega^t)\subset\cpn$ is a totally geodesic complex hyperplane. Then, we construct the following ruled real hypersurface 
	\[\psi: M=\RR\times \pi(\Omega) \to \cpn, \quad \psi(t,[z])=\pi(\hp(t,z)), \quad 
	\psi(M)=\bigcup_{t\in\mathbb{R} }\pi(\Omega^t).
	\] 
The differential of $\hp$ is
	\begin{align*} \partial_t\hp & =(\cosh(t)z_n,0,\ldots,0,\sinh(t)z_n), \\ 
		d\hp(0,X) & = (\sinh(t)X_n,X_1\ldots,X_{n-1},\cosh(t)X_n), \quad X\in T_z\Omega.
	\end{align*}
Similarly to previous examples,
	\begin{align*}
		&\langle i\hp,d\hp(0,X)\rangle = \langle iz,X\rangle,  \quad \langle \partial_t\hp,d\hp(0,X)\rangle = 0, \\
		&  \langle d\hp(0,X),d\hp(0,Y)\rangle = \langle X,Y \rangle,  \quad 
		\langle \partial_t\hp,\partial_t\hp\rangle =- \vert z_n\vert^2<0, \\
		& \hat{\xi}_{\hp(t,z)}=\frac{\partial_t\psi}{\vert z_n\vert}, \quad 
		\hat{N}_{\hp(t,z)} = i\hat{\xi}_{\hp(t,z)}=\frac{i}{\vert z_n\vert}\left(\cosh(t)z_n,0,\ldots,0,\sinh(t)z_n\right).
	\end{align*}
	Now, $\hat{N}$ is a horizontal, time-like, unit normal vector field along $\hp$. An integral curve of $\hat{\xi}$ is
	\begin{equation} \label{integralxi} \alpha(s)=\hp\left(t+s/\vert z_n\vert, z\right)
	= \left( \sinh\left(t+s/\vert z_n\vert\right)z_n,z_1,\ldots,z_{n-1},\cosh\left(t+s/\vert z_n\vert\right)z_n\right).
	\end{equation}
	Let $\hat{A}$ be the shape operator associated with $\hat{N}$. Then, by \eqref{GaussEq}, 
	\begin{align*}
		&\hat{A}\hat{\xi}=-\hat{\nabla}_{\hat{\xi}}\hat{N} = - D_{\hat{\xi}}\hat{N} = - \left.\frac{d}{ds} \hat{N}_{\alpha(s)}\right \vert_{s=0} = - \left.\frac{d}{ds} \hat{N}_{\hp(t+s/\vert z_n\vert,z)}\right \vert_{s=0} \\
		& = - \left.
		\frac{d}{ds} \frac{i}{\vert z_n\vert} \left(\cosh\left(t+s/\vert z_n\vert\right)z_n,0,\ldots,0,\sinh\left(t+s/\vert z_n\vert\right)z_1\right) \right \vert_{s=0} \\
		&=\frac{-i}{\vert z_n\vert^2} \left(\sinh(t)z_n,0,\ldots,0, \cosh(t)z_n\right); \\
		& \langle \hat{A}\hat{\xi},\hat{\xi}\rangle=0. 
	\end{align*}
As before, $\psi(M)$ is a minimal ruled real hypersurface in $\cpn$. 
	
Let us study now the integral curves of $\hat{\xi}$. Note that $\langle \alpha',i\alpha\rangle=0$, which means that $\alpha$ is a horizontal curve. By \eqref{GaussEq}, 
\begin{align*}
		& F(s):= \hat{\nabla}_{\alpha'(s)}\alpha'(s) = D_{\alpha'(s)}\alpha'(s)+\langle \alpha'(s),\alpha'(s)\rangle \alpha(s) =
		\alpha''(s)-\alpha(s) \\
		& = \frac{1}{\vert z_n\vert^2}(\sinh\left(t+s/\vert z_n\vert\right),0,\ldots,0,\cosh(t+s/\vert z_n\vert)z_n)-\alpha(s) \\
		& = \left(\left(\frac{1}{\vert z_n\vert^2}-1\right)\sinh(t+s/\vert z_n\vert)z_n,-z_1,\ldots,-z_{n-1}, \left(\frac{1}{\vert z_n\vert^2}-1\right) \cosh(t+s/\vert z_n\vert)z_n\right); \\
		&  \langle F(s),F(s)\rangle =		\frac{1}{\vert z_n\vert^2}-1.
\end{align*}
When $\vert z_n\vert^2=1$, $F(s)$ is zero or light-like. The first case holds when $z=(0,\ldots,0,e^{ir})$, and the second one otherwise. When $\vert z_n\vert^2<1$, $F(s)$ is space-like. And when $\vert z_n\vert^2>1$, $F(s)$ is time-like. In addition, similar computations give $\langle F(s),\alpha(s)\rangle=\langle F(s),i\alpha(s)\rangle=0$, which show that $F(s)$ is a horizontal vector along $\alpha$. We need this to project it safely to $\cpn$. 
	
	\noindent $\bullet$ Case $\vert z_n\vert=1$. Then, $F(s)=(0,z_1,\ldots,z_{n-1},0)$, which is constant and either zero or light-like. The curve $\pi(\alpha(s))$ is the case $a)$ or $c)$ of Theorem \ref{TH1}. 
	
	\noindent $\bullet$ Case $\vert z_n\vert<1$. Then, $F(s)$ is space-like. We compute the Frenet system. 
\begin{align*}
		&\kappa_1:=\sqrt{\frac{1}{\vert z_n\vert^2}-1}>0, \quad F_1(s)=\alpha'(s),\ \varepsilon_1=-1, \\
		& F_2(s):=\frac{F(s)}{\kappa_1}
		=\left(\kappa_1\sinh(t+s/\vert z_n\vert)z_n,
		\frac{-z_1}{\kappa_1},\ldots,\frac{-z_{n-1}}{\kappa_1},\kappa_1(s)\cosh(t+s/\vert z_n\vert)z_n\right), \\
		&\varepsilon_2=\langle F_2(s),F_2(s)\rangle =1; \qquad 
		 \hat{\nabla}_{F_1(s)}F_1(s)=\kappa_1 F_2(s)=\varepsilon_2\kappa_1F_2(s).
	\end{align*}
Next, as $\langle F_2,\alpha'\rangle=0$, by \eqref{GaussEq},
	\begin{align*}
		\hat{\nabla}_{F_1(s)}F_2(s)=F_2'(s)+\langle F_1(s),F_2(s)\rangle\alpha(s)=\kappa_1F_1(s)=-\varepsilon_1\kappa_1F_1(s).
	\end{align*}
	This is a Frenet curve of order 2, with $F_1$ time-like, $F_2$ space-like, constant curvature $\kappa_1$ and 
	\begin{align*}
		&  \langle JF_1(s),F_2(s)\rangle = \frac{1}{\vert z_n\vert}\left\langle (i\cosh(t+s/\vert z_n\vert)z_n,0,\ldots,0,
		i\sinh(t+s/\vert z_n\vert)z_n),F_2(s)\right\rangle =0.
	\end{align*}
	Thus, if we project to $\cpn$, $\pi(\alpha)$ is a curve of type $b)$ in Theorem \ref{TH1}. 
	
	\noindent $\bullet$ Case $\vert z_n\vert>1$. Then, $F(s)$ is time-like. Similar computations give 
	\begin{align*}
		&\kappa_1:=\sqrt{1-\frac{1}{\vert z_n\vert^2}}>0, \quad F_1(s)=\alpha'(s),\ \varepsilon_1=-1, \\
		& F_2(s):=\frac{-F(s)}{\kappa_1}
		=\left(-\kappa_1\sinh(t+s/\vert z_n\vert)z_n,\frac{z_1}{\kappa_1},\ldots,\frac{z_{n-1}}{\kappa_1},-\kappa_1\cosh(t+s/\vert z_n\vert)z_n\right), \\
		&\langle F_2(s),F_2(s)\rangle = -1; 
		\qquad  \hat{\nabla}_{F_1(s)}F_1(s)=-\kappa_1 F_2(s)=\varepsilon_2\kappa_1F_2(s).
	\end{align*}
We also obtain $\langle JF_1(s),F_2(s)\rangle=0$. Next, as $\langle F_2,\alpha'\rangle=0$, by \eqref{GaussEq},
	\begin{align*}
		\hat{\nabla}_{F_1(s)}F_2(s)=F_2'(s)+\langle \alpha'(s),F_2(s)\rangle\alpha(s)=-\kappa_1F_1(s)=\varepsilon_1\kappa_1F_1(s).
	\end{align*}
	This is a Frenet curve of order 2, with $F_1$, $F_2$ time-like, constant curvature $\kappa_1$ and  totally real. 
	If we project to $\cpn$, $\pi(\alpha)$ is a curve of type $b)$ in Theorem \ref{TH1}. 
	
As in Example \ref{exam-first}, there exist different points lying in the same connected component of $\hat{M}$, such that the integral curves of $\hat{\xi}$ passing through them behave in a very different way. Projecting everything to $\cpn$, we have a  minimal ruled real hypersurface in $\cpn$ such that certain integral curves of $\xi$, starting at different points at the same connected component, behave as in any case of Theorem 2. 
\end{example}

\begin{example}\label{examp-second} \normalfont 
	We take $2\leq p\leq n-1$, $n\geq 3$. Consider $\Omega=\{z=(z_1,\ldots,z_n)\in \mathbb{S}_{2p}^{2n-1} : z_1\neq 0\}$. We define the map 
	\[\hp:\mathbb{R}\times \Omega\to \mathbb{S}^{2n+1}_{2p}, \quad \hp(t,z)=
	(\sin(t)z_1,\cos(t)z_1,z_2\ldots,z_{n}).\] 
We use the matrix $\mathbf{A}_{t}$ as in Example \ref{exam-first}. Define for each $t\in\mathbb{R}$, $\Omega^t:= \hp\left(\{t\}\times\Omega\right)=\Omega^t=\mathbf{A}_{t}(\hp(\{0\}\times \Omega))=\mathbf{A}_{t}(\Omega^0)$. 
Again, $\Omega^t$ is also totally geodesic, and so  $\pi(\Omega^t)\subset\cpn$ is a totally geodesic complex hyperplane. Then, we have the following ruled real hypersurface
	\[\psi: M=\RR\times \pi(\Omega) \to \cpn, \quad \psi(t,[z])=\pi(\hp(t,z)), \quad 
	\psi(M)=\bigcup_{t\in\mathbb{R} }\pi(\Omega^t).
	\] 
Similar computations provide 
	\begin{align*} \partial_t\hp & =(\cos(t)z_1,-\sin(t)z_1,0,\ldots,0), \\ 
		d\hp(0,X) & = (\sin(t)X_1,\cos(t)X_1,X_2,\ldots,X_{n}), \quad X\in T_z\Omega. 
	\end{align*}
	\begin{align*}
		\langle i\hp,d\hp(0,X)\rangle = \langle iz,X\rangle, & \quad \langle \partial_t\hp,d\hp(0,X)\rangle = 0, \\
		\langle d\hp(0,X),d\hp(0,Y)\rangle = \langle X,Y \rangle, & \quad 
		\langle \partial_t\hp,\partial_t\hp\rangle = -\vert z_1\vert^2<0. 
	\end{align*}
	
	\[ \hat{\xi}_{\hp(t,z)}=\frac{\partial_t\hp}{\vert z_1\vert}, \quad 
	\hat{N}_{\hp(t,z)} = i\hat{\xi}_{\hp(t,z)}=\frac{1}{\vert z_1\vert}\left(i \cos(t)z_1, -i \sin(t)z_1,0,\ldots,0\right).
	\]
In addition, $\hat{N}$ is a horizontal, time-like, unit normal vector field along $\hp$.	We point out that an integral curve of $\hat{\xi}$ is
\begin{equation} \label{integralxi} \alpha(s)=\hp\left(t+s/\vert z_1\vert, z\right)
	= \left(\sin\left(t+s/\vert z_1\vert\right)z_1,\cos\left(t+s/\vert z_1\vert\right)z_1,z_2\ldots,z_{n}\right).
\end{equation}
	Indeed, $\alpha(0)=\hp(t,z)$, $\alpha'(0)=
	\left. \frac{d}{ds}\hp\left(t+s/\vert z_1\vert,z\right) \right\vert_{s=0} =
	\frac{1}{\vert z_1\vert} \partial_t\hp (t,z)=\hat{\xi}_{\hp(t,z)}$, and 
	\[\alpha'(s)=\frac{1}{\vert z_1\vert}\left(\cos\left(t+s/\vert z_1\vert\right)z_1,-\sin\left(t+s/\vert z_1\vert\right)z_1,0,\ldots,0\right) = \hat{\xi}_{\hp(t+s/\vert z_1\vert,z)}= \hat{\xi}_{\alpha(s)}.
	\]
Let $\hat{A}$ be the shape operator associated with $\hat{N}$. Then, by \eqref{GaussEq}, 
	\begin{align*}
		&\hat{A}\hat{\xi}=-\hat{\nabla}_{\hat{\xi}}\hat{N} = - D_{\hat{\xi}}\hat{N} = - \left.\frac{d}{ds} \hat{N}_{\alpha(s)}\right \vert_{s=0} = - \left.\frac{d}{ds} \hat{N}_{\hp(t+s/\vert z_1\vert,z)}\right \vert_{s=0} \\
		& = - \left.
		\frac{d}{ds} \frac{i}{\vert z_1\vert}\left(\cos\left(t+s/\vert z_1\vert\right)z_1,-\sin\left(t+s/\vert z_1\vert\right)z_1,0,\ldots,0\right)\right \vert_{s=0} \\
		&=\frac{1}{\vert z_1\vert^2} \left(-i \sin(t)z_1,i\cos(t)z_1,0,\ldots,0\right); \\
		&\langle \hat{A}\hat{\xi},\hat{\xi}\rangle=0.
	\end{align*}
As before, $\psi(M)$ is a minimal ruled real hypersurface in $\cpn$.  
	
Let us study now the integral curves of $\hat{\xi}$. By \eqref{GaussEq}, 
	\begin{align*}
		& F(s):= \hat{\nabla}_{\alpha'(s)}\alpha'(s) = D_{\alpha'(s)}\alpha'(s)+\langle \alpha'(s),\alpha'(s)\rangle \alpha(s) =\alpha''(s)-\alpha(s) \\
		& = -\left( \left(\frac{1}{\vert z_1\vert^2}+1\right)\sin(t+s/\vert z_1\vert)z_1,
		\left(\frac{1}{\vert z_1\vert^2}+1\right)\cos(t+s/\vert z_1\vert )z_1,z_2,\ldots,z_n\right), \\
		& \langle F(s),F(s)\rangle=-1.
	\end{align*}
Given our computations so far, the Frenet system is 
	\begin{align*}
	&\kappa_1:=\sqrt{\frac{1}{\vert z_1\vert^2}+1}>0, \quad F_1(s)=\alpha'(s),\\
	& F_2(s):=\frac{-F(s)}{\kappa_1}
	= \left(-\kappa_1\sin(t+s/\vert z_1\vert)z_1,-\kappa_1 \cos(t+s/\vert z_1\vert)z_1,\frac{-z_2}{\kappa_1},\ldots,\frac{-z_{n}}{\kappa_1}\right), \\
	& \varepsilon_1=\langle \alpha',\alpha'\rangle=+1,\ \varepsilon_2=\langle F_2,F_2\rangle=-1, \quad 
	\hat{\nabla}_{F_1(s)}F_1(s)=-\kappa_1 F_2(s)=\varepsilon_2\kappa_1F_2(s). 
	\end{align*}
We also obtain $\langle JF_1(s),F_2(s)\rangle=0$. Next, since $\langle F_2,\alpha'\rangle=0$, by \eqref{GaussEq},
	\begin{align*}
	\hat{\nabla}_{F_1(s)}F_2(s)=F_2'(s)+\langle F_1(s),F_2(s)\rangle\alpha(s)=F_2'(s)=
	-\kappa_1F_1(s). 
	\end{align*}
If we project to $\cpn$, $\pi(\alpha)$ is a curve of type $b)$ in Theorem \ref{TH1}. 
	\end{example}

\end{document}